\newtheorem{theorem}{Theorem}[section]
\newtheorem{proposition}[theorem]{Proposition}
\newtheorem{corollary}[theorem]{Corollary}
\newtheorem{lemma}[theorem]{Lemma}
\theoremstyle{definition}
\newtheorem{definition}[theorem]{Definition}
\newtheorem{example}[theorem]{Example}
\theoremstyle{remark}
\newtheorem{remark}[theorem]{Remark}
\numberwithin{equation}{section}
\newcommand{\F}{\ensuremath{\mathcal{F}}}
\newcommand{\Flin}{\ensuremath{\mathcal{F}^{\ell}}}
\newcommand{\metric}{\ensuremath{ \mathrm{g} }}
\newcommand{\pr}{\ensuremath{\mathsf{p}}}
\newcommand{\ol}{\overline}
\title[Lie groupoids and semi-local models of SRF]{Lie groupoids and semi-local models of singular Riemannian foliations}
\author[Alexandrino]{Marcos M. Alexandrino}
\author[Inagaki]{Marcelo K. Inagaki}
\author[de Melo]{Mateus de Melo}
\author[Struchiner]{Ivan Struchiner}
\address{M. M. Alexandrino, M. K. Inagaki,  M. de Melo and  I. Struchiner  \hfill\break\indent 
Universidade de S\~{a}o Paulo, Instituto de Matem\'{a}tica e Estat\'{i}stica, \hfill\break\indent 
Rua do Mat\~{a}o 1010, 05508-090 S\~{a}o Paulo, Brazil.}
\email{(Alexandrino) marcosmalex@yahoo.de, m.alexandrino@usp.br}
\email{(Inagaki) kodi.inagaki@gmail.com}
\email{\textbf{(de Melo)  melomm@ime.usp.br}  }
\email{(Struchiner) ivanstru@ime.usp.br}
\subjclass[2000]{Primary 53C12, Secondary 57R30}
\keywords{Singular Riemannian foliation, Lie groupoids, linearization, Molino's conjecture}
\thanks{The first author was supported by  grand $\#$ 2016/23746-6,  São Paulo Research Foundation (FAPESP).
The second author was supported  by CNPq (Conselho Nacional de Desenvolvimento Cient\'{\i}fico e Tecnol\'{o}gico). 
The third author was supported by grant $\#$ 2019/14777-3, São Paulo Research Foundation (FAPESP).
The fourth author was supported by grant   $\#$ 2015/22059-2, São Paulo Research Foundation (FAPESP)  and CNPq (307131/2016-5). 
In addition, this study was financed in part by the Coordena\c{c}\~{a}o de Aperfei\c{c}oamento de Pessoal de N\'{\i}vel Superior Brasil (CAPES)-Finance Code 001.
}
\begin{document}

\begin{abstract}
We describe a local model for any Singular Riemannian Foliation in a neighbourhood of a closed saturated submanifold of a regular stratum. Moreover we construct a Lie groupoid which controls the transverse geometry of the linear approximation of the Singular Riemannian Foliation around these submanifolds.
We also discuss  the closure of this Lie groupoid and its Lie algebroid. 
\end{abstract}

\maketitle

\section{Introduction}
In the theory of regular foliations a crucial role is played by the \emph{holonomy groupoid} of the 
foliation which gives a complete description of the geometry of the foliation transverse to its leaves. The holonomy groupoid of a foliation should be thought of as an atlas for the singular (and badly behaved) leaf space of the foliation.
For example, if the leaf space of a foliation $\mathcal{F}$ on a manifold $M$ is a manifold (or an orbifold), then the geometric structures it admits are in correspondence with geometric structures on on the normal bundle $\nu(\mathcal{F}) = TM/T\mathcal{F}$ which are invariant under the natural action of the holonomy groupoid of $\mathcal{F}$ (see for example \cite{Haefliger}). Moreover, when the holonomy groupoid of a foliation is well behaved (e.g., a proper/compact Lie groupoid) one obtains a simple explicit model for the foliation in a neighbourhood of a leaf known as the Reeb Stability Theorem (see \cite{Moerdijk-Mrcun,Reeb}, or \cite{Crainic-Struchiner}).

It is therefore natural to try to extend the construction of the holonomy groupoid to the case of singular foliations. The main attempt, so far, to obtain this generalisation has been made in \cite{Androulidakis-Skandalis} where a singular foliation is defined in terms of the module of vector fields which generates the foliation, see also \cite{Androulidakis-Zambon} and \cite{Garmendia-Zambon} . However, the groupoid constructed is rarely a Lie groupoid, and in this level of generality, it is possible that there does not exist a smooth groupoid describing the holonomy of the singular foliation.

In this paper we focus on a special case of singular foliations known as Riemannian foliations. We also consider a ``more geometric" approach to the definition of a singular foliation as a partition of the ambient manifold into leaves instead of fixing the module of vector fields chosen to generate it. Our main purpose is two-fold: on the one hand we describe a local model for any singular Riemannian foliation in a neighbourhood of a closed saturated submanifold of a  stratum, and on the other hand we obtain a holonomy groupoid for the linearization of a singular Riemannian foliation in a neighbourhood of such submanifolds. The holonomy groupoid that we obtain does not solve the problem of obtaining a Lie groupoid describing the transverse geometry of an arbitrary singular Riemannian foliation. This is still an open problem. However, our groupoid fits conceptually into this framework. Every Lie groupoid has a first order approximation around a saturated submanifold which is a transformation groupoid associated to a representation of the restriction of the original groupoid to the submanifold on the normal bundle of the submanifold \cite{Crainic-Mestre-Struchiner,Fernandes-Hoyo}. Even though the possible existence of a Lie groupoid describing the original singular Riemannian foliation is an open problem, what we obtain is a candidate for its first order approximation in a neighbourhood of any closed saturated submanifold of a regular stratum.   

We now explain in more details the results of this paper, but before we do so, we feel it is necessary to warn the reader that there are two distinct notions of holonomy that appear in this paper. The first one which already appeared above is the \emph{leafwise holonomy of a regular foliation}. The second one is the \emph{holonomy of a connection} obtained by parallel translations along paths for a fixed connection on a vector bundle (or a principal bundle). We hope that with this warning we will avoid unnecessary confusions and we will try to minimize this possibility by writing $\nabla$-holonomy for the second concept whenever it is not clear from the context.

\subsection*{Semi-Local Models for Singular Riemannian Foliations}

Given a Riemannian manifold $(M, \metric)$, a partition $\F = \{L\}$ of $M$ into complete connected submanifolds (the \emph{leaves of $\F$}) is called  a \emph{singular foliation} if every vector tangent to a leaf can be locally extended to a vector field everywhere tangent to the leaves (see \cite{Sussmann}). A singular foliation $\F$ is called \emph{Riemannian} (SRF for short) if each geodesic starting perpendicular to a leaf stays perpendicular to all leaves it meets.

A typical example of a SRF is the decomposition of a Riemannian manifold $M$ into the orbits of an isometric group action on $M$. Such a foliation is called \emph{homogeneous}. Another relevant example of a SRF is the \emph{$\nabla$-holonomy foliation} (presented below in Example \ref{holonomy-foliation}) which is related to other important types of foliations, like polar foliations \cite{Toeben} or Wilking's dual foliation to the Sharafutdinov projection \cite{Wilking}.

\begin{example}[$\nabla$-Holonomy foliation] \label{holonomy-foliation}
Let $B$ be a complete Riemannian manifold, $\mathbb{R}^{k} \to E \stackrel{\pi}{\rightarrow} B$ an Euclidean vector bundle (i.e. a vector bundle with a fiberwise metric) and $\nabla^{\tau}: \mathfrak{X} (B) \times \Gamma (E) \longrightarrow \Gamma (E)$ a linear connection which is compatible with the fiberwise metric of $E$.

Denote by $C^{\infty}([0, 1], M)$ the set of piecewise smooth curves in $M$ and by $\text{Hol}^{\tau}$ the $\nabla$-holonomy groupoid of $\nabla^{\tau}$ (i.e. the groupoid generated by all parallel transports along curves in $B$).
Then the partition $\F^{\tau} = \{\text{Hol}^{\tau}(v)\}_{v \in E}$ where
\begin{equation*}
\text{Hol}^{\tau}(v) := \left\{ \mathcal{P}_{\alpha} (v) \in E : \alpha \in C^{\infty}([0, 1], M), \; \pi(v) = \alpha(0) \right\}
\end{equation*}
is a SRF with respect to the \emph{Sasaki metric} on $E$ (i.e. if we denote by $\mathcal{T}$ the linear horizontal distribution on $E$ determined by $\nabla^\tau$, then the Sasaki metric is the metric which turns $T_v E = E_{\pi(v)} \oplus \mathcal{T} |_v$ an orthogonal decomposition, preserving the fiberwise metric of $E$ and the metric induced by the isomorphism $d \pi |_{\mathcal{T}}$ on $\mathcal{T}$).

We should stress three simple geometrical aspects of the $\nabla$-holonomy foliation. First considering the representation of $\text{Hol}^{\tau}$ on $E$ it is possible to see that the holonomy foliation is in fact given by the orbits of a groupoid, more precisely by the orbits of the transformation groupoid $\text{Hol}^{\tau} \ltimes E$ (see the definition in Section \ref{section-facts-groupoids}). Second, the intersection of the holonomy leaves with the fibers of $E$ are the orbits of holonomy groups. The last property is particularly special since there are infinitely many examples of non homogeneous SRFs on Euclidean spaces (see Radeschi \cite{Radeschi-clifford}). Third, $\mathcal{T}$ is tangent to $\F^{\tau}$ and $\mathcal{T} = T \F^{\tau}$ iff the connection $\nabla^{\tau}$ is flat (i.e., when $\F^{\tau}$ is regular foliation).
\end{example}

\begin{figure}[!htb]
\centering
\includegraphics[width=0.9\textwidth]{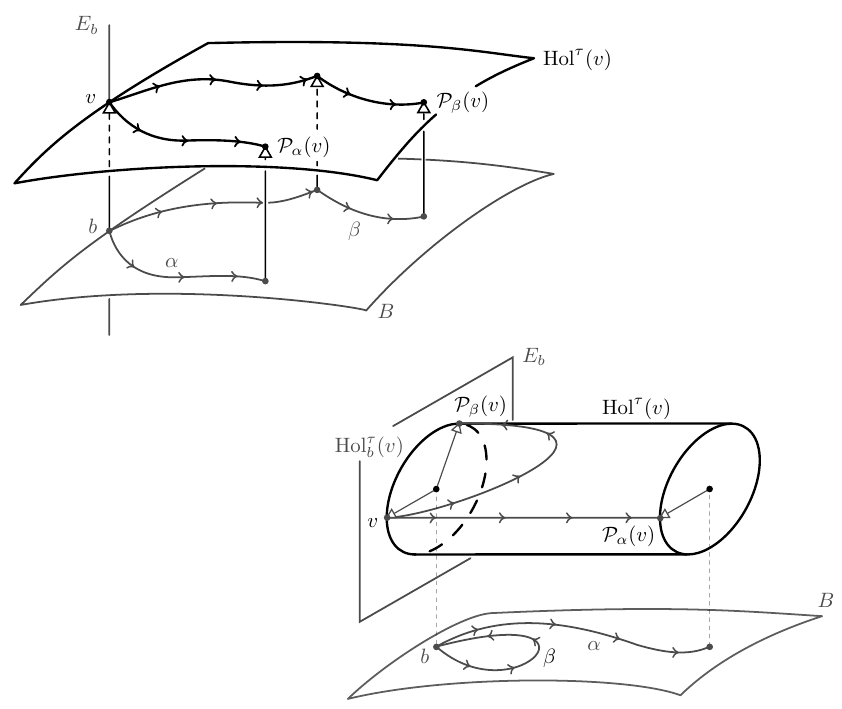}
\caption{Two possible illustrated schemes for holonomy foliation where $\mathcal{P}_\alpha$ and $\mathcal{P}_\beta$ denote the parallel transport along $\alpha$ and $\beta$, respectively.}
\label{holonomy}
\end{figure}

In \cite{Alexandrino-Radeschi-Molinosconjecture}, the first author and Radeschi proved the so called Molino's conjecture which states that \emph{given $(M, \F)$ a SRF on a complete manifold $M$, the partition $\ol{\F} = \{ \ol{L} \mid L\in \F\}$ of $M$ into the closures of the leaves of $\F$ is also a SRF}. In order to prove Molino's conjecture, they defined two foliations on an $\epsilon$-tubular neighbourhood $U$ around a closure of fixed leaf $L$. The first foliation was the so called \emph{linearized foliation} $\Flin$ of $\F$ in $U$. It is a subfoliation of $\F$ spanned by the first order approximations, around $\overline{L}$, of the vector fields tangent to $\F$. The second foliation denoted by $\widehat{\F}^\ell$ was then obtained from $\Flin$ by taking the ``local closure'' of the leaves of $\Flin$. Roughly speaking both foliations described the semi-local dynamical behavior of the foliation $\F$ (see Section \ref{preliminaries-foliation} for the definitions).

Let us now illustrate $\F,\,\Flin,\,\widehat{\F}^\ell$ in the prototypical examples (described bellow) which are convenient generalisations of example \ref{holonomy-foliation}.

\begin{example} \label{intermediate-holonomy-foliation}
Just like in the example \ref{holonomy-foliation} let $B$ be a complete Riemannian manifold, $\mathbb{R}^{k}\to E \stackrel{\pi}{\rightarrow} B$ an Euclidean vector bundle and $\nabla^{\tau}$ a connection which is compatible with the fiberwise metric of $E$.

Additionally to the previous data in Example \ref{holonomy-foliation}, consider
\begin{itemize}
\item $\F^E = \{L^{E}_{v}\}_{v \in E}$ a singular foliation on $E$ such that each fiber $E_{b}$ is saturated and $\F_{b}:= \F^{E}|_{E_{b}}$ is an \emph{infinitesimal foliation} (i.e. $\F_b$ is a SRF on a vector space $E_{b}$ with $\{0_b\}$ as a leaf). Assume that $\F^E$ is $\text{Hol}^\tau$-invariant (i.e. the parallel transport sends leaves into leaves).
\end{itemize}
Denote by $K^{0}_{b}$ the maximal connected Lie subgroup of isometries of $E_{b}$ that fixes each leaf of $\F_{b}$
and $\overline{K^{0}_{b}}$ the closure of $K_{b}^{0}$ for each $b \in B$ (see the discussion in Example \ref{example-linearization}).
 Then the following three partitions of $E$ are in fact smooth singular foliations:
\begin{enumerate}
\item[(a)] $\F = \{ \text{Hol}^\tau (L^{E}_{v}) \}_{v \in E}$;

\item[(b)] $\Flin = \{ \text{Hol}^\tau (K^{0}_{\pi(v)} (v)) \}_{v \in E}$;

\item[(c)] $\widehat{\F}^\ell = \{ \text{Hol}^\tau (\overline{K_{\pi(v)}^{0}}(v)) \}_{v \in E}$.
\end{enumerate}

For the Sasaki metric on $E$ (described in Example \ref{holonomy-foliation}), the singular foliation $\F$ turns out to be a SRF, $\Flin$ becomes its linearized foliation, and $\widehat{\F}^\ell$ becomes the local closure of $\Flin$.

\end{example}

Example \ref{intermediate-holonomy-foliation} is in fact the semi-local model of a SRF in a $\epsilon$-tubular neighborhood of a closed leaf $L = B$ (see Theorem \ref{theorem-semi-local-model}). In order to obtain a semi-local model around more general closed saturated submanifolds of a  stratum, one must also take into account the restriction of the original foliation to the submanifold. This leads us to the following generalisation of the previous example.

\begin{example}[Generalized holonomy foliation] \label{generalized-holonomy-foliation}
As in the previous example, let $B$ be a complete Riemannian manifold, and $\mathbb{R}^{k}\to E \stackrel{\pi}{\rightarrow} B$ an Euclidean vector bundle endowed with a singular foliation $\F^E = \{L^{E}_{v}\}_{v \in E}$ such that each fiber $E_{b}$ is saturated and $\F_{b}:= \F^{E}|_{E_{b}}$ is an infinitesimal foliation . This time we consider also a (regular) Riemannian foliation $\F_B$ on $B$ and we take $\nabla^\tau$ to be an $\F_B$-partial connection on $E$ which is compatible with the fiberwise metric on $E$, and such that $\F^E$ is invariant under parallel translation with respect to $\nabla^\tau$.

If we denote by $K^{0}_{b}$ the maximal connected group of isometries of $E_{b}$ that fixes each leaf of $\F_{b}$ and by $\overline{K^{0}_{b}}$ the closure of $K_{b}^{0}$ for each $b \in B$, then we obtain three foliations on $E$:

\begin{enumerate}
\item[(a)] $\F = \{ \text{Hol}^{\tau} (L^{E}_{v}) \}_{v \in E}$ where $L^{E}_{v}\in \F^{E}$;

\item[(b)] $\Flin = \{ \text{Hol}^{\tau} (K^{0}_{\pi(v)} (v)) \}_{v \in E}$;

\item[(c)] $\widehat{\F}^\ell = \{ \text{Hol}^{\tau} (\overline{K_{\pi(v)}^{0}}(v)) \}_{v \in E}$,
\end{enumerate}
where $Hol^\tau$ is generated by parallel translations along paths in the leaves of $\F_B$, with respect to $\nabla^\tau$.

It follows that there is a Sasaki metric on $E$ such that the foliation $\F$ turns out to be a SRF, $\Flin$ becomes its linearized foliation, and $\widehat{\F}^\ell$ becomes the local closure of $\Flin$. 
\end{example}

Our first theorem states that the example above is in fact a local model for any SRF in a small tubular neighbourhood of any closed saturated submanifold of a  stratum.

\begin{theorem} \label{theorem-semi-local-model}
Let $\F$ be a singular Riemannian foliation on a complete manifold $(M,g)$ and $B$ be a closed saturated submanifold contained in a stratum of the foliation. Then there exists a saturated $\epsilon$-tubular neighborhood $U$ of $B$ in $M$ such that the foliations $\F,\,\Flin,\,\widehat{\F}^\ell$ restricted to $U$ are foliated diffeomorphic to the foliations described in the Example \ref{generalized-holonomy-foliation} where the Euclidean vector bundle $E$ is the normal bundle of $B$.
\end{theorem}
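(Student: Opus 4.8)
The plan is to establish part (a) via the normal exponential map and then read off the groupoids of part (b) from the resulting model. First I would let $E=\nu(B)$ be the normal bundle of $B$, equipped with the fiber metric induced by $\metric$, and choose $\epsilon>0$ small enough that the normal exponential map $\exp^{\perp}$ restricts to a diffeomorphism from the open $\epsilon$-disk bundle of $E$ onto a tubular neighborhood $U$ of $B$. Because $B$ is saturated and $\fol$ is Riemannian, the leaves of the infinitesimal foliations lie in distance spheres and parallel transport is isometric, so the function $d(\cdot,B)$ is constant along every leaf meeting $U$; hence $U$ may be taken saturated. Pulling $\fol$ back through $\exp^{\perp}$ produces a singular foliation $\fol^{e}$ on the disk bundle, and the task reduces to verifying that $(E,\fol^{e})$ carries exactly the structure demanded in Example \ref{example-prototype-2}.

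There are three features to check. Since $B$ lies in a single stratum, $\fol|_{B}$ is a regular Riemannian foliation $\fol_{B}$, which supplies the base foliation. Next, by the slice theorem for singular Riemannian foliations the restriction of $\fol$ to the slice $\exp^{\perp}(E_{b})$ is the infinitesimal foliation at $b$; transporting this through $\exp^{\perp}$ shows each fiber $E_{b}$ is saturated and that $\fol_{b}:=\fol^{e}|_{E_{b}}$ is an infinitesimal foliation with $\{0\}$ as a leaf. Finally I would build the connection $\nabla$: parallel transport of normal vectors along leaves of $\fol_{B}$, governed by the homothetic transformation lemma, carries slices to slices preserving their infinitesimal foliations. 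This yields a metric partial connection $\nabla^{\tau}$ along $\fol_{B}$ whose holonomy groupoid $H$ leaves $\fol^{e}$ invariant, which is precisely the third bullet of Example \ref{example-prototype-2}. Thus $\fol|_{U}$ is foliated diffeomorphic to the model $\fol=\{H(L^{e}_{\xi_{b}})\}$.

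To finish part (a) I would identify $\fol^\ell$ and $\widehat{\fol}^\ell$ under this diffeomorphism. Using the description of the linearized foliation as the one spanned by the first order approximations of the vector fields tangent to $\fol$, together with the fact from \cite{Alexandrino-Radeschi-Molinosconjecture} that the linearization of an infinitesimal foliation $\fol_{b}$ is exactly the partition into orbits of the maximal connected isometry group $K^{0}_{b}$ fixing each leaf of $\fol_{b}$, one reads off $L^{\ell}_{\xi}=H K^{0}_{b}(\xi)$ and, taking local closures, $\widehat{L}^{\ell}_{\xi}=H\,\ol{K^{0}_{b}}(\xi)$. These match items (b) and (c) of Example \ref{example-prototype-2}, completing part (a).

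For part (b) I would assemble a Lie groupoid from the two ingredients appearing in these leaves. The holonomy groupoid $H$ of $\nabla^{\tau}$ is already a Lie groupoid acting on $E$, and the groups $K^{0}_{b}$ fit together into a Lie group bundle $K^{0}\to B$; because $H$ preserves each infinitesimal foliation $\fol_{b}$, it normalizes this bundle, so the groupoid generated by $H$ and the fiberwise action of $K^{0}$ is again a Lie groupoid, with orbits $H K^{0}_{b}(\xi)=L^{\ell}_{\xi}$. Replacing $K^{0}$ by the bundle of closures $\ol{K^{0}}$, which consists of compact groups, yields a Lie groupoid whose orbits are the leaves $\widehat{L}^{\ell}_{\xi}$ of $\widehat{\fol}^\ell$. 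I expect the main obstacle to lie in the second paragraph: constructing $\nabla$ and proving the $H$-invariance of $\fol^{e}$ rests on the fine structure theory of singular Riemannian foliations (the slice theorem, the homothetic transformation lemma, and the linearization result), while establishing that $b\mapsto K^{0}_{b}$ is smooth enough to form a Lie group bundle requires the homogeneity of the model along the stratum.
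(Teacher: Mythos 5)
Your treatment of part (a) follows essentially the same route as the paper: pull back through the normal exponential map, use the slice theorem and homothetic invariance to produce the metric partial connection $\nabla^{\tau}$ (this is Proposition \ref{corollary-connections-tau-affine}), and identify $\F^{\ell}$ with the model $L^{\ell}_{\xi}=HK^{0}_{b}(\xi)$. The one step you dismiss with ``one reads off'' is exactly the content of Proposition \ref{lemma-linearized-vector-linearfoliation}: the nontrivial inclusion $\F^{\ell}_{0}\subset\widetilde{\F}^{\ell}_{0}$ is proved by factoring a linearized flow $\varphi_{t}$ as $\mathcal{P}_{(\gamma_{b_{0},b_{t}})^{-1}}\circ k_{t}$, where $k_{t}$ is a curve of isometries of the fiber fixing $\F_{b_{0}}$ and hence a curve in $K^{0}_{b_{0}}$. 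That is a short argument and your sketch is compatible with it.

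Part (b) is where there is a genuine gap. You propose to take ``the groupoid generated by $H$ and the fiberwise action of $K^{0}$'' and assert it is a Lie groupoid because $H$ normalizes the bundle $K^{0}$. But the entire difficulty is to put a smooth manifold structure on the space of arrows of such a groupoid; a groupoid generated by a pseudogroup of flows carries no a priori smooth structure, which is precisely why (as the introduction of the paper notes) no monodromy groupoid is available for singular foliations in general. The mechanism the paper uses --- absent from your proposal --- is to lift $\F^{\ell}$ to the orthogonal frame bundle $O(E)$, where the fiberwise $K^{0}_{b}$-action becomes free, and to invoke Corollary \ref{corollary-isomorphismK} to see that $\dim K^{0}_{b}$ is constant in $b$, so that the lifted foliation $\widetilde{\F}$ is \emph{regular}. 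Only then is the monodromy groupoid $\Pi_{1}(\widetilde{\F})$ a Lie groupoid; quotienting by the free and proper $O(n)$-action (Proposition \ref{prop:quotient}) yields a Lie groupoid $\mathcal{G}\rightrightarrows B$, and the desired groupoid is the transformation groupoid $\mathcal{G}\ltimes E$. Your closing remark correctly flags the smoothness of $b\mapsto K^{0}_{b}$ as an obstacle, but you leave it unresolved; the frame-bundle lift is exactly the device that resolves it, and without it (or some substitute) the claim that your generated groupoid is a Lie groupoid is unsupported.
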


The main ingredients of the proof of item were already presented in \cite{Alexandrino-Radeschi-Molinosconjecture}. Here we put  these ingredients together with help of  Proposition \ref{lemma-linearized-vector-linearfoliation} stressing its semi-local description (see also the discussion in Mendes and Radeschi \cite{Mendes-Radeschi} for the case where $B$ is a closed leaf).


\subsection*{Lie Groupoids and Singular Riemannian Foliations}
Lie groupoids are structures which generalize smooth manifolds, Lie groups and Lie group actions (see \cite{Moerdijk-Mrcun} or Section \ref{section-facts-groupoids} below). Every Lie Groupoid determines a singular foliation on its base manifold by taking  (the connected components of) its orbits. It is not yet known how to characterise the singular foliations which arise as orbits of a Lie groupoid. Moreover, even when a singular foliation arises in this fashion, there does not exist in general a canonical Lie groupoid which describes it. In contrast, to any regular foliation one can associate two canonical Lie groupoids which describe it, namely, the monodromy and the holonomy groupoids of the foliation (see Section \ref{section-facts-groupoids}). For a given regular foliation $\mathcal{F}$ on $M$, the monodromy groupoid $\Pi_1(\mathcal{F})$ is the unique source simply connected Lie groupoid which integrates the Lie algebroid $T\mathcal{F}$, while the holonomy groupoid $\mathrm{Hol}(\mathcal{F})$ is the terminal object in the category of source connected Lie groupoids which has $\mathcal{F}$ as its orbit foliation (see \cite{Crainic-Moerdijk-foliation}). 

The orbit foliation associated to a Lie groupoid is not in general a SRF. However, if the groupoid is proper, or more generally if it admits a compatible Riemannian metric (a Riemannian Groupoid), then the orbit foliation is in fact a SRF (see \cite{Fernandes-Hoyo,Hessel-Pflaum-Tang}). An important feature of Riemannian Groupoids is that they can be linearized around saturated submanifolds \cite{Fernandes-Hoyo}. In a nutshell, this means that the restriction of a Lie groupoid $\mathcal{G}\rightrightarrows M$ to a small tubular neighbourhood of a saturated submanifold $B$ of $M$ is locally isomorphic to a transformation groupoid associated to a representation (i.e., a linear action) of the restriction of $\mathcal{G}$ to $B$ on the normal bundle $E$ of $B$ in $M$.

It is therefore natural to assume that if there exists a holonomy groupoid associated to a SRF, then it should be linearizable. In this paper we construct a canonical linear groupoid associated to the linearized foliation $\mathcal{F^\ell}$ in a tubular neighbourhood of a closed saturated  submanifold contained in a regular stratum. This is the content of the following theorem.

\begin{theorem} \label{theorem-linear-holonomy-groupoid}
Let $\F$ be a singular Riemannian foliation on a complete manifold $(M,g)$ and $B$ be a closed saturated submanifold contained in a stratum of the foliation. Then there exists a saturated $\epsilon$-tubular neighborhood $U$ of $B$ in $M$ such that the leaves of the foliations $\Flin,\,\widehat{\F}^\ell$ restricted to $U$ are orbits of a canonical  transformation  groupoid associated to a representation of a regular groupoid $\mathcal{G} \rightrightarrows B$ on the normal bundle $E$ of $B$ in $M$.
\end{theorem}

We argue that the transformation groupoid whose orbits are the leaves of $\F^{\ell}$ obtained in the theorem above should be thought of as the linearization of the holonomy groupoid of the SRF $\F$ around $B$ (even though we do not know if such a holonomy groupoid exists). For this reason, we call this groupoid the  \emph{Linear Holonomy Groupoid} of $\F$ at $U$.


The construction of the Linear Holonomy Groupoid of $\F$ at $U$ relies on the fact that we can ``lift" the foliation $\Flin$ to an $O(n)$-invariant regular foliation on the orthogonal frame bundle of the normal bundle $E$ of $B$ in $M$. The (usual) holonomy groupoid  of this (regular) lifted foliation comes with a free action of $O (n)$ by automorphisms, and the quotient groupoid comes with a canonical representation on E.

It is possible to check that $\overline{\F^{\ell}}=(\overline{\F})^\ell$
and hence to conclude that
 the leaves of SRF  $\overline{\F^{\ell}}$ 
are also orbits of a Lie groupoid. It is then a natural to ask what is
the relation of the 
Linear Holonomy Groupoid  $\mathcal{G}^{\ell}$ that describe $\F^{\ell}$ 
and this ``bigger'' groupoid. 
 
\begin{theorem} 
\label{prop:subgrupoids}
Let $\F$ be a singular Riemannian foliation on a complete manifold $(M,g)$ and $B = \ol{L}$. 
Then there exists a proper Lie groupoid $\overline{\mathcal{G}^{\ell}}$ over  a saturated $\epsilon$-tubular neighborhood $U$ of $B$ whose orbits are 
 the leaves  of $\overline{\F^{\ell}}$. In addition $\mathcal{G}^{\ell}$ is a dense Lie subgroupoid of $\overline{\mathcal{G}^{\ell}}$.  
\end{theorem}

\begin{remark}
Let $\F$ be a Riemannian foliation on $B$. Then the fact that $B=\overline{L_{q}}$ implies that for all $b\in B$ we have that 
$B=\overline{L_b}$, i.e., that the foliation is dense. 
\end{remark}

\subsection*{ $\F$-partial connection  and Lie Algebroid} 
In the particular case when the regular foliation $\F_B$ is \emph{a dense foliation} (each leaf  $L\in \F_B$ is dense) and $U$ is a saturated $\epsilon$-tubular neighborhood around $B$ we have a second subfoliation $\F^{\tau}$ so that $$\F^{\tau} \subset \F^{\ell} \subset \F_U.$$ Roughly speaking, the subfoliation $\F^{\tau}$ could be thought of as the foliation produced just by taking parallel transports of normal vectors with respect to some $\F$-partial connection. In other words, if we consider Example \ref{generalized-holonomy-foliation} applied to the particular case where $\F_B$ is a dense foliation, $E$ is the normal bundle of $B$, and $\F^E$ is the foliation given by the points of $E$, then the partition $\F^{\tau} = \{L^{\tau}_{v}\}_{v \in E}$ which has leaves $L^{\tau}_{v} := \text{Hol}^\tau (v)$ for $v \in E$ is a singular (smooth) subfoliation of $\F$. The next result also assure that the leaves of $\F^{\tau}$ are orbits of a Lie groupoid.
More generally, we can consider a foliation
$ \F^{\tau} $ induced by a partial connection
$ \nabla^{\tau} $, just starting with a dense 
foliation $ \F_B $, without assuming that it is a  Riemannian
foliation. 

\begin{theorem} \label{foliated-Ambrose-and-Singer}
Let $\mathbb{R}^{n}\to E\to B$ be an Euclidean vector bundle. Assume that there exists a dense foliation $\F_{B}$ on the basis $B$
(each leaf  $L\in \F_B$ is dense) 
 and a partial linear connection $\nabla^{\tau}: \mathfrak{X}(\F_{B})\times \Gamma(E)\to \Gamma(E)$ compatible with the metric of $E$. Consider the singular partition $\F^{\tau} = \{L^{\tau}_{v}\}_{v \in E}$, with leaves $L^{\tau}_{v} = \text{Hol}^{\,\tau} (v)$ where $\text{Hol}^{\,\tau}$ is the holonomy groupoid of $\nabla^{\tau}$. Then the leaves of the  singular foliation $\F^{\tau}$ are orbits of a transformation groupoid associated to a representation of a Lie groupoid over $B$ on $E$.
\end{theorem}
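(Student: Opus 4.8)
The plan is to reduce the statement to the classical monodromy groupoid of a \emph{regular} foliation by passing to the orthonormal frame bundle. Let $\pi\colon F=O(E)\to B$ be the orthonormal frame bundle of the Euclidean bundle $E$, a principal $O(n)$-bundle. Since $\nabla^{\tau}$ is compatible with the fibre metric, it induces a partial principal connection on $F$ along $\F_{B}$: horizontal lift with respect to $\nabla^{\tau}$ defines an $O(n)$-invariant distribution $\mathcal{H}\subseteq TF$ of constant rank $\dim\F_{B}$ that projects isomorphically onto $T\F_{B}$ at each frame. Parallel transport of $\nabla^{\tau}$ along a leafwise curve is precisely the $\mathcal{H}$-horizontal lift of that curve, so that writing $E=F\times_{O(n)}\mathbb{R}^{n}$ and $\xi=[u,v]$ we have $L_{\xi}=H(\xi)=\{\,[u',v]\ :\ u'\in F(u)\,\}$, where $F(u)$ is the $\mathcal{H}$-accessible set of $u$ (all frames reachable from $u$ by $\mathcal{H}$-horizontal leafwise paths).

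First I would identify the accessible sets $F(u)$ as the leaves of a foliation $\widehat{\F}$ on $F$. By the foliated Ambrose--Singer reasoning, $\mathcal{H}$ is involutive up to curvature: the smallest involutive distribution $\mathcal{D}\supseteq\mathcal{H}$ generated by iterated brackets of horizontal lifts satisfies $\mathcal{D}_{u}=\mathcal{H}_{u}\oplus\mathfrak{hol}_{u}$, with $\mathfrak{hol}_{u}\subseteq\mathfrak{o}(n)$ the holonomy Lie algebra at $u$, and its orbits are exactly the holonomy subbundles $F(u)\to L$, which are principal $\mathrm{Hol}_{u}$-subbundles over the leaf $L$ of $\F_{B}$ through $\pi(u)$. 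Thus $\dim F(u)=\dim\F_{B}+\dim\mathfrak{hol}_{u}$.

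The main obstacle is to prove that $\widehat{\F}$ is a \emph{regular} foliation, i.e. that $\dim\mathcal{D}_{u}$ is independent of $u$. Here I would use the two hypotheses in an essential way. Since $\mathcal{H}$ is $O(n)$-invariant and the holonomy algebras at points of one leaf are conjugate (related by $\mathcal{H}$-horizontal transport and the $O(n)$-action), the rank $\dim\mathcal{D}_{u}$ is $O(n)$-invariant and constant along $\widehat{\F}$-leaves; it therefore descends to a function $\phi\colon B\to\mathbb{Z}$ that is constant along the leaves of $\F_{B}$. Because $\mathcal{D}$ is generated by the smooth family of horizontal lifts and their brackets, $u\mapsto\dim\mathcal{D}_{u}$ is lower semicontinuous, so $\phi$ is lower semicontinuous and the set where it fails to attain its maximal value is closed and $\F_{B}$-saturated. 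At this point the density of $\F_{B}$ is decisive: a nonempty closed $\F_{B}$-saturated set contains a leaf, and since that leaf is dense in $B$ the closed set must equal $B$, which is absurd unless it is empty. Hence $\phi$ is constant and $\widehat{\F}$ is regular; the metric-compatibility of $\nabla^{\tau}$ (ensuring $\mathrm{Hol}_{u}\subseteq O(n)$ with $\overline{\mathrm{Hol}_{u}}$ compact) is what allowed us to work inside $O(E)$ in the first place. Establishing this constancy carefully is the heart of the argument.

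Once $\widehat{\F}$ is a regular $O(n)$-invariant foliation on $F$, the remainder is formal. The monodromy groupoid $\mathrm{Mon}(\widehat{\F})\rightrightarrows F$ is a Lie groupoid whose orbits are the leaves $F(u)$. The free and proper $O(n)$-action on $F$ preserves $\widehat{\F}$, hence lifts to a free proper action on $\mathrm{Mon}(\widehat{\F})$ by groupoid automorphisms, and the quotient $\mathcal{G}^{\tau}:=\mathrm{Mon}(\widehat{\F})/O(n)\rightrightarrows B$ is again a Lie groupoid. An arrow of $\mathcal{G}^{\tau}$ over $(b_{0},b_{1})$ is represented by an $\mathcal{H}$-horizontal homotopy class between frames over $b_{0}$ and $b_{1}$, and quotienting by the diagonal $O(n)$-action records exactly the resulting isometry $E_{b_{0}}\to E_{b_{1}}$, so $\mathcal{G}^{\tau}$ acts on $E$ by leafwise parallel transport. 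Finally I would form the action groupoid $\mathcal{G}^{\tau}\ltimes E\rightrightarrows E$, which is a Lie groupoid; by the identification $L_{\xi}=\{\,[u',v]:u'\in F(u)\,\}$ above, its orbits are precisely the leaves of $\F^{\tau}$, completing the proof.
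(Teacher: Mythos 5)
Your proposal is correct and follows essentially the same route as the paper: lift to the orthogonal frame bundle $O(E)$, prove the lifted holonomy foliation is regular by combining lower semicontinuity of the orbit dimension with density of the leaves of $\F_{B}$, then pass to the monodromy groupoid, quotient by the free proper $O(n)$-action, and form the action groupoid on $E$. The only cosmetic difference is in how constancy of $\dim\mathfrak{hol}$ is established --- you use a closed, $\F_{B}$-saturated maximal-locus argument, while the paper squeezes $\dim\mathfrak{h}_{y}$ between $\dim\mathfrak{h}_{b}$ and $\dim\mathfrak{h}_{b_{n}}$ along a sequence $b_{n}\in L_{b}$ converging to a point $y$ of the slice --- but both rest on exactly the same two ingredients.
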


The Lie groupoid   over $B$ 
of the previous theorem is constructed only in terms of the $\F$-partial connection $\F$. 
It is a Lie subgroupoid of the Gauge Groupoid associated to the frame bundle of $E$. In this sense, the theorem above may be thought of as a generalisation of the Ambrose-Singer reduction theorem to the context of foliated connections. 
We will explore this point of view even further in Section \ref{section-rotate-translate-groupoid}
when we describe the infinitesimal objects associated to the groupoids of this paper, i.e., the \emph{Lie algebroids} (see Section \ref{section-facts-algebroids}). This will give also an alternative way of obtaining the Lie groupoids via integration of their algebroids  
(see Definition \ref{definition-algebroid}
and  Propositions  \ref{proposition-algebroid-puro} and  \ref{proposition-algebroid-SRF}).

\subsection*{Organization of the Paper}

This paper is organized as follows. In Section \ref{preliminaries-foliation} we review preliminary facts needed in the rest of the paper. 
In particular we review the main ingredients of the theory of SRF that will allow us  to prove 
  Theorem \ref{theorem-semi-local-model} (see Section \ref{section-semi-local-modelsofSRF}).
 In Section \ref{section-lie-groupoid-structure} we discuss the Lie groupoid structure needed to prove 
Theorem \ref{theorem-linear-holonomy-groupoid}
 and Theorem \ref{foliated-Ambrose-and-Singer}. In these proofs, the  frame bundles associated to the foliation are used in a natural way. The proof indicates that orthogonal frame bundles can play an important role in the study of SRF as they have been playing in the study of (regular) Riemannian foliations (see Molino's book \cite{Molino}). 
We give in Section \ref{section-subgroupoid-closure} 
the proof of 
Theorem \ref{prop:subgrupoids}. 
%
%
 Finally in Section \ref{section-rotate-translate-groupoid} we remark that  the existence of the Lie groupoid structure in  Example \ref{generalized-holonomy-foliation} does not require the hypothesis that $\F_{B}$ is a Riemannian foliation on $B$ and stress its Lie algebroid structure   (Propositions  \ref{proposition-algebroid-puro} and  \ref{proposition-algebroid-SRF}).
By moving from the concrete case of SRF to abstract considerations in  Section \ref{section-rotate-translate-groupoid}, we hope to provide the proper motivation for readers who are not experts in the theory of Lie groupoids.

\section*{Acknowledgements}
The authors thank Fernando M. Escobosa for useful suggestions.

\section{Preliminaries} \label{preliminaries-foliation}

\subsection{A few facts about SRF} \label{section-facts-SRF}

In this section we briefly review a few facts on a SRF $(M, \F)$ extracted from \cite{Alexandrino-Radeschi-Molinosconjecture} (see also \cite{Mendes-Radeschi} and \cite{Molino}).

\subsubsection{Linearization of vector fields}

Let $B$ be a closed saturated submanifold contained in a stratum, e.g., the closure of a leaf ($B = \overline{L}$) or the minimal stratum of $\F$, and let $U\subset M$ be an $\epsilon$-tubular neighbourhood of $B$ with metric projection $\pr: U \to B$. For each smooth vector field $\vec{V}$ in $U$, we can associate a smooth vector field $\vec{V}^\ell$, called the \emph{linearization of $\vec{V}$ with respect to $B$}, as follows:
\[
\vec{V}^\ell = \lim_{\lambda \to 0} \vec{V}_{\lambda}
\]
where $\vec{V}_{\lambda} |_{q} := (h_{\lambda}^{-1})_*(\vec{V} |_{h_{\lambda}(q)})$ and $h_\lambda: U \to U$ denotes the homothetic transformation around $B$, i.e., the map given by $h_{\lambda} (\exp(v)) := \exp(\lambda v)$ for each $v \in \nu^\epsilon (B)$ and $\lambda \in (0, 1]$. Since the plaques of $\F$ are invariant under homothetic transformation, one can conclude that if $\vec{X}$ is a vector field tangent to $\F$, then $\vec{X}^{\ell}$ is still tangent to the foliation $\F$.

\begin{example}[Infinitesimal foliation] \label{example-linearization}
Consider a SRF $\F$ on $\mathbb{R}^n$, where $B=\{0\}$ is a closed leaf. Given a smooth vector field $\vec{X}$  tangent to the leaves, the associated linearized vector field is given by
\[
\vec{X}^{\ell}_v = \lim_{\lambda\to 0}(h_{\lambda}^{-1})_* \vec{X}_{ h_{\lambda}(v)} = \lim_{\lambda\to 0}\frac{1}{\lambda} \vec{X}_{\lambda v} = \left(\nabla_{v} \vec{X}\right)_{0}
\]
 Note that the linear vector field $\vec{X}^{\ell}_{(\cdot)}=(\nabla_{(\cdot)} \vec{X})_{0}$ is determined by a matrix  which is skew-symmetric and hence $\vec{X}^{\ell}$ is a Killing vector field. In fact
since $\vec{X}^{\ell}$ is tangent to the leaves, it is tangent to the distance spheres around $0$, and therefore $$0 = \langle \vec{X}^{\ell}_v,v \rangle = \left\langle \left(\nabla_{v} \vec{X}\right)_0, v \right\rangle,$$ for all unitary $v$.
We can define $\mathfrak{k}$  
as the maximal Lie subalgebra of $\mathfrak{o}(n)$ 
that induces these Killing vector fields fields and $K^{0}$  the connected subgroup  of $SO(n)$ that has Lie algebra $\mathfrak{k}$. It is not difficult to check 
that $K^0$ is the maximal connected Lie subgroup of $SO(n)$ that fixes each leaf of $\F$. In addition, when the leaves of $\F$ are compact,
$K^{0}$ is compact. 

\end{example}

\subsubsection{The linearized foliation $\Flin$} \label{subsection-linearized foliation}

Let $\mathsf{D}$ be the pseudogroup of local diffeomorphisms of $U$, generated by the flows of linearized vector fields tangent to $\F$. Then the partition of $U$ into the orbits of diffeomorphisms in $\mathsf{D}$ is called the \emph{linearized foliation of $\F$ (with respect to $B$)} and is denoted by $(U,\Flin).$

Since the linearization of vector fields tangent to $\F$ are still vector fields tangent to $\F$, one concludes that $\F^{\ell}$ is a subfoliation of $\F$. In other words, the leaves of $\F^{\ell}$ are contained in the leaves of $\F$ and $\F^{\ell}|_{B}$ coincides with $\F|_{B}$.

We now recall that $\F^{\ell}$ is the \emph{maximal infinitesimal homogeneous subfoliation of $\F$}. In fact, given a point $b \in B$, denote $U_b := \pr^{-1}(b) \subset U$ and let $\F_b$ (resp. $\F^{\ell}_{b}$) denote the partition of $U_b$ into the connected components of $\F \cap U_b$ (resp. $\F^{\ell}\cap U_b$). It was proved in \cite[Propositions 6.5]{Molino} that $\F_b$ turns out to be a SRF on the Euclidean space $(U_b, \metric_b)$ called the \emph{(reduced) infinitesimal foliation at $b$}, once we identify $U_b$ (via exponential map) with an open set of $\nu_b (B)$ with the flat metric $\metric_b$. In addition the  foliation $(U_b, \F^{\ell}_{b})$ is homogeneous, more precisely the leaves of $\F^{\ell}_{b}$ are orbits of $K^{0}_{b}$, where $K^{0}_{b}$ denotes the maximal connected Lie group of isometries of $\nu_b (B)$ that fixes each leaf of $\F_b$ (cf. Example \ref{example-linearization}).

More generally, given a vector field $\vec{X}$ tangent to the leaves, the flow $\varphi_t$ of the linearized vector field $\vec{X}^{\ell}$ induces an isometry between $(U_b, \metric_b)$ and $(U_{\varphi_t (b)}, \metric_{\varphi_t (b)})$.

\subsubsection{The distributions $\mathcal{K}$, $\mathcal{T}$ and $\mathcal{N}$} \label{subsection-3-distribution}

Let us now review the definition of the 3 important distributions necessary to understand the semi-local model of $\F$.
\begin{itemize}
\item $\mathcal{K}=\ker\pr_*$ where $\pr: U \to B$.

\item There exists a distribution $\widehat{\mathcal{T}} \subset T U$ tangent to the leaves of $\F$, such that $\widehat{\mathcal{T}} |_B = T \F |_{B}$ (for a construction of such distribution see \cite[Proposition 3.1]{Alexandrino-desingularization}). The \emph{distribution $\mathcal{T}$ is the linearization of $\widehat{\mathcal{T}}$} with respect to $B$. \newline We point out that $\mathcal{T}$ is tangent to $\F$ since it is a linearization of a distribution tangent to the leaves and the rank of $\widehat{\mathcal{T}}$ and $\mathcal{T}$ are equal to $\dim \F|_B$.

\item Let $b = \pr(q)$ and $\widehat{\mathcal{N}}_q$ be the subspace of $T_q S_b$ which is $g_b$-orthogonal to $\mathcal{K}_q$ where $S_b$ is the slice of $\F$ at $b$. The \emph{distribution $\mathcal{N}$ is the linearization of $\widehat{\mathcal{N}}$ with respect to $B$}.
\end{itemize}
Note that all three distributions are homothetic invariant and $TU = \mathcal{K} \oplus H$ with $H := \mathcal{T}\oplus\mathcal{N}$ (see figure \ref{TNK}). We quickly conclude that $\mathcal{K}$, $\mathcal{T}$ and $\mathcal{N}$ can be identified with homothetic invariant distributions on the normal bundle $\nu(B)$ and particularly $H$ is identified with a horizontal distribution in $\nu(B)$.


\begin{figure}[!htb]
\centering
\includegraphics[width=0.6\textwidth]{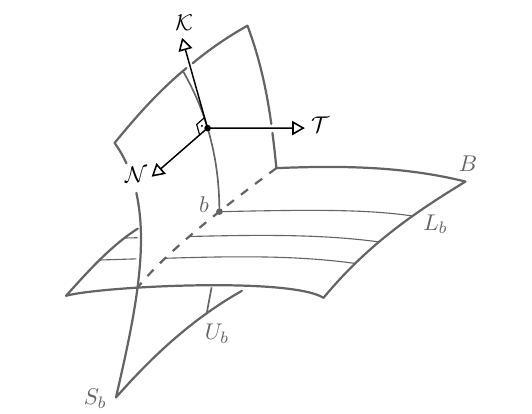}
\caption{Illustrated scheme of the distributions $\mathcal{K}$, $\mathcal{T}$ and $\mathcal{N}$.}
\label{TNK}
\end{figure}

The next two results are consequences of a discussion presented in Section 5 of \cite{Alexandrino-Radeschi-Molinosconjecture}.

\begin{proposition} \label{proposition-connections-tau-affine}
The homothetic invariant distribution $\mathcal{T} \oplus \mathcal{N}$ induces a linear connection $\nabla: \mathfrak{X}(B)\times \Gamma(\nu(B))\to \Gamma(\nu(B))$ that when restricted to $\F_{B}$ induces a partial linear connection $\nabla^{\tau}: \mathfrak{X}(\F_{B})\times \Gamma(\nu(B))\to \Gamma(\nu(B))$ compatible with the metric of $\nu(B)$.
\end{proposition}

\begin{example}[Regular case around closed leaf]\label{regular-case-1}
If $\F$ is regular and $B = L$ is a closed leaf of $\F$ then, by counting dimensions, we conclude that $\mathcal{N}$ needs to be the zero distribution and $\mathcal{T}$ needs to be the linearization of $T \F$. In this case the induced parcial connection $\nabla^{\tau}$ is in fact a (total) connection. More precisely $\nabla^{\tau}$ is the restriction to $L$ of the Bott connection of $\F$, since both have the same horizontal distribution.
\end{example}

The distributions $\widehat{\mathcal N}$ and $\mathcal N$ satisfy the following property:

\begin{proposition}[\cite{Alexandrino-Radeschi-Molinosconjecture}] \label{proposition-transverlinearfield}
For every smooth $\F_B$-basic vector field $\vec{Y}_{0}$ along a plaque $P$ in $B$ there exists a smooth extension $\vec{Y}_{0}$ to an open set of $U$ such that
\begin{enumerate}
\item $\vec{Y}_{0}$ is foliated and tangent to $\widehat{\mathcal N}$.

\item The linearization $\vec{Y}:=\vec{Y}_{0}^{\ell}$ of $\vec{Y}_{0}$ with respect to $B$ is tangent to $\mathcal{N}$, and it is foliated with respect to both $\F$ and $\Flin$.
\end{enumerate}
\end{proposition}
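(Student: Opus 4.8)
The plan is to build the extension by a lift, and then to read off every property of its linearization from one structural fact: that linearization is the ``leading term'' for the homothety grading and hence behaves well under Lie brackets.

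\emph{Construction of the extension.} First I would enlarge the given datum inside the base. Since $\F_B$ is a (regular) Riemannian foliation on $B$ and $\vec Y_0$ is $\F_B$-basic along the plaque $P$, it extends to an $\F_B$-basic (foliated, $\nu(\F_B)$-valued) vector field on a saturated neighbourhood of $P$ in $B$, by the standard extension of basic fields along a plaque of a Riemannian foliation. Next I would lift this field to $U$. Because $\mathcal K=\ker\pr_*$ and $\widehat{\mathcal N}_q$ is, by definition, the $g_p$-orthogonal complement of $\mathcal K_q$ inside $T_qS_p$, the projection $\pr_*$ restricts to an injection $\widehat{\mathcal N}_q\hookrightarrow T_pB$ with image the $\F_B$-normal space $\nu(\F_B)_p$; this lets me define the candidate extension $\vec Y_0$ on $U$ as the unique section of $\widehat{\mathcal N}$ projecting to the basic field on $B$. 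By construction $\vec Y_0$ is tangent to $\widehat{\mathcal N}$, so what remains for claim (1) is to arrange that it is also $\F$-foliated. For this I would use that $\pr$ is an $\F$-foliated map, whence $\mathcal K$ is an $\F$-invariant distribution, and that $\widehat{\mathcal T}$ is tangent to the leaves of $\F$: projecting an arbitrary $\F$-foliated extension of the basic field onto $\widehat{\mathcal N}$ along the splitting $TU=\widehat{\mathcal T}\oplus\widehat{\mathcal N}\oplus\mathcal K$ should then yield a field that is simultaneously tangent to $\widehat{\mathcal N}$ and $\F$-foliated.

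\emph{The homothety grading.} Let $\vec E$ denote the radial vector field generating the homotheties $h_\lambda$. Writing a vector field on $U\cong\nu(B)$ in fibre coordinates and decomposing it into $\vec E$-homogeneous pieces, one checks that $(h_\lambda^{-1})_*$ acts on the weight-$k$ piece by the scalar $\lambda^{k}$, and that the weights are $\ge 0$ exactly when the vertical ($\mathcal K$-)component of the field vanishes along $B$. Since $T\F|_B=T\F_B\subseteq TB$, every $\F$-tangent generator $\vec X$ has vanishing vertical component along $B$, and our horizontal $\vec Y_0$ has none at all; thus both have only non-negative weights, their linearizations exist, and each equals its weight-$0$ part. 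As $(h_\lambda^{-1})_*$ is a Lie algebra morphism, brackets are additive in weight, so for any two such fields
\[
[\vec V,\vec W]^{\ell}=[\vec V^{\ell},\vec W^{\ell}],
\]
the only weight-$0$ part of the bracket being the bracket of the weight-$0$ parts. This identity is the engine for claim (2).

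\emph{Properties of $\vec Y:=\vec Y_0^{\ell}$.} Tangency to $\mathcal N$ is then immediate, as $\mathcal N$ is by definition the linearization of $\widehat{\mathcal N}$ and $\vec Y$ is the linearization of a section of $\widehat{\mathcal N}$. That $\vec Y$ is $\F$-foliated follows because each $\vec Y_\lambda=(h_\lambda^{-1})_*\vec Y_0$ is $\F$-foliated (the homotheties preserve the plaques of $\F$ and $\vec Y_0$ is $\F$-foliated), and being tangent to $\F$ is a pointwise closed condition surviving the limit $\vec Y_\lambda\to\vec Y$. Finally, for $\F^{\ell}$-foliatedness it suffices to bracket $\vec Y$ against the generators $\vec X^{\ell}$ of $\F^{\ell}$: using the bracket identity and then that $\vec Y_0$ is $\F$-foliated,
\[
[\vec Y,\vec X^{\ell}]=[\vec Y_0^{\ell},\vec X^{\ell}]=[\vec Y_0,\vec X]^{\ell},
\]
and $[\vec Y_0,\vec X]$ is tangent to $\F$, so its linearization is one of the generators of $\F^{\ell}$ and in particular lies in $\mathfrak X(\F^{\ell})$. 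Jacobi and Leibniz upgrade this from generators to all of $\mathfrak X(\F^{\ell})$, giving claim (2).

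\emph{Main obstacle.} The delicate step is the construction in claim (1): making the $\widehat{\mathcal N}$-tangent extension genuinely $\F$-foliated. Foliated flows need not be isometries, so they need not preserve the $g_p$-orthogonal splitting used to define $\widehat{\mathcal N}$; the argument must instead lean on the intrinsic $\F$-invariance of $\mathcal K=\ker\pr_*$ and of the stratum $S_p$, together with the tangency of $\widehat{\mathcal T}$ to $\F$, to guarantee that the $\widehat{\mathcal N}$-projection of an $\F$-foliated field stays $\F$-foliated. Once (1) is secured, the homothety grading makes (2) essentially formal.
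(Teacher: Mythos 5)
First, a point of reference: this paper does not actually prove Proposition \ref{proposition-transverlinearfield}; it is quoted verbatim from Section 5 of \cite{Alexandrino-Radeschi-Molinosconjecture}, so there is no in-paper argument to compare yours against. Judged on its own terms, the second half of your proposal is essentially correct and uses exactly the right mechanism: the homothety grading, the resulting identity $[\vec V,\vec W]^{\ell}=[\vec V^{\ell},\vec W^{\ell}]$ for fields whose vertical component vanishes along $B$, the limit argument for $\F$-foliatedness of $\vec Y=\lim_\lambda \vec Y_\lambda$, and the Leibniz-rule reduction to generators for $\F^{\ell}$-foliatedness are all sound (the only routine point left implicit is that pointwise tangency of $[\vec Y,\vec X]$ to $T\F$ suffices, which holds for SRFs because $T_x\F$ is the evaluation of the module of tangent fields).

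The genuine gap is in item (1), and it is the actual content of the proposition, as you yourself concede in your closing paragraph. Neither of your two constructions is carried through. For the first (the unique $\widehat{\mathcal N}$-valued lift of a basic extension on $B$) you offer no reason why this lift is $\F$-foliated: its flow projects to a flow preserving $\F_B$ on $B$, but away from $B$ the leaves of $\F$ are higher-dimensional and nothing forces the lifted flow to preserve them; this is precisely the nontrivial step. For the second (taking the $\widehat{\mathcal N}$-component of ``an arbitrary $\F$-foliated extension'' with respect to $TU=\widehat{\mathcal T}\oplus\widehat{\mathcal N}\oplus\mathcal K$) two things are missing: (i) the existence of \emph{some} $\F$-foliated extension of $\vec Y_0$ to an open set of $U$ is itself not automatic for a singular foliation and is essentially equivalent to what must be proven; and (ii) even granting such an extension $\vec Z$, your splitting argument only shows that $\vec Z_{\widehat{\mathcal N}}+\vec Z_{\mathcal K}$ is foliated (since $\vec Z_{\widehat{\mathcal T}}$ is tangent to $\F$), so you would still need the vertical component $\vec Z_{\mathcal K}$ to be foliated, which you neither prove nor is obvious. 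As it stands, the proposal reduces the proposition to its hardest step without supplying it; the reader must be sent to \cite{Alexandrino-Radeschi-Molinosconjecture} for the construction of the foliated $\widehat{\mathcal N}$-tangent extension.
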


We also need a simple but important observation.

\begin{corollary} \label{corollary-isomorphismK}
Let $\vec{Y}$ be the vector field defined in Proposition \ref{proposition-transverlinearfield}, $\varphi^{Y}_{s}$ the local flow associated to $\vec{Y}$, and $K^{0}_{b}$ the Lie group defined in Section \ref{subsection-linearized foliation}. Then $\varphi_{s}^{Y}$ induces an isomorphism $\widehat{\varphi}: K^{0}_{b} \to K^{0}_{\tilde{b}}$ where $\tilde{b} = \varphi_{s}^{Y}(b)$.
\end{corollary}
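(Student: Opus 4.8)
The plan is to realize the desired isomorphism $\widehat{\varphi}$ as conjugation by the restriction of the flow $\varphi^{Y}_{s}$ to a single $\pr$-fiber, so the whole argument reduces to checking that this restriction is an isometry that matches the two infinitesimal foliations. First I would verify that $\varphi^{Y}_{s}$ carries fibers to fibers. Since $\vec{Y}$ is tangent to $\mathcal{N}$ and $\mathcal{N}$ is transverse to $\mathcal{K}=\ker \pr_{*}$, the field $\vec{Y}$ is $\pr$-projectable and $\pr_{*}\vec{Y}$ is a well-defined vector field on $\base$; writing $\ol{\varphi}_{s}$ for its flow we get $\pr\circ\varphi^{Y}_{s}=\ol{\varphi}_{s}\circ\pr$, so $\varphi^{Y}_{s}$ maps $U_{p}=\pr^{-1}(p)$ diffeomorphically onto $U_{q}=\pr^{-1}(q)$ with $q=\varphi^{Y}_{s}(p)$. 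Set $\psi:=\varphi^{Y}_{s}|_{U_{p}}\colon U_{p}\to U_{q}$.

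Next I would establish the two properties of $\psi$ on which everything rests. Because $\vec{Y}=\vec{Y}_{0}^{\ell}$ is a linearized field, it is invariant under the homotheties $h_{\lambda}$ (for a linear field $\xi\mapsto A\xi$ one has $(h_{\lambda})_{*}\vec{Y}=\vec{Y}$, exactly as in Example \ref{example-linearization}); hence $\varphi^{Y}_{s}$ commutes with $h_{\lambda}$, and $\psi$ commutes with the radial scalings of the fibers while sending the center $p$ to the center $q$. A smooth, positively homogeneous degree-one map fixing the origin is linear, so $\psi$ is a linear isomorphism $\nu_{p}(\base)\to\nu_{q}(\base)$. The key point is that $\psi$ is in fact a linear \emph{isometry} $(U_{p},\metric_{p})\to(U_{q},\metric_{q})$: this is the mechanism recalled in Section \ref{subsection-linearized foliation} for flows of linearized fields, and here it follows from the compatibility of the induced connection with the fiber metric (Proposition \ref{corollary-connections-tau-affine}) together with $\vec{Y}$ being foliated and tangent to $\mathcal{N}$, so that $\psi$ coincides with parallel transport of a metric connection. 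Moreover, since $\vec{Y}$ is foliated with respect to $\fol$ (Proposition \ref{proposition-transverlinearfield}), $\varphi^{Y}_{s}$ preserves $\fol$, and therefore $\psi$ maps the leaves of the infinitesimal foliation $\fol_{p}$ bijectively onto the leaves of $\fol_{q}$.

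Finally I would define $\widehat{\varphi}\colon K_{p}^{0}\to K_{q}^{0}$ by $\widehat{\varphi}(k)=\psi\circ k\circ\psi^{-1}$. For $k\in K_{p}^{0}$ the map $\psi k\psi^{-1}$ is a composition of isometries, hence an isometry of $(U_{q},\metric_{q})$; and since $k$ fixes each leaf of $\fol_{p}$ while $\psi$ matches the leaves of $\fol_{p}$ with those of $\fol_{q}$, it fixes each leaf of $\fol_{q}$. Thus $\psi k\psi^{-1}$ lies in the group of isometries of $\nu_{q}(\base)$ fixing each leaf of $\fol_{q}$. As $k\mapsto\psi k\psi^{-1}$ is a continuous homomorphism and $K_{p}^{0}$ is connected, its image is a connected subgroup containing the identity, hence contained in $K_{q}^{0}$; conjugating by $\psi^{-1}$ gives the inverse homomorphism $K_{q}^{0}\to K_{p}^{0}$, so $\widehat{\varphi}$ is the required isomorphism. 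I expect the only non-formal step to be the isometry of $\psi$ (the cited fact of Section \ref{subsection-linearized foliation} is phrased for linearizations of leaf-tangent fields, whereas $\vec{Y}_{0}$ is tangent to $\widehat{\mathcal{N}}$, so one must note that the same reasoning applies to any foliated linearized field tangent to $\mathcal{N}$); the remaining steps are bookkeeping with flows and conjugation.
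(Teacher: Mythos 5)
Your proof is correct and rests on the same basic device as the paper's: realizing $\widehat{\varphi}$ as conjugation by $\psi=\varphi^{Y}_{s}|_{U_{p}}\colon U_{p}\to U_{q}$. The two arguments diverge, however, in how they certify that $\psi k\psi^{-1}$ lands in $K_{q}^{0}$. The paper's key lemma is the Claim that linearization commutes with conjugation of flows: if the flow of $\vec{Y}$ carries $\vec{X}^{1}$ to $\vec{X}^{2}$, then the flow of $\vec{Y}^{\ell}$ carries $(\vec{X}^{1})^{\ell}$ to $(\vec{X}^{2})^{\ell}$ (proved by conjugating by $h_{\lambda}$ and letting $\lambda\to 0$). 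Hence the conjugate of a one-parameter group in $K_{p}^{0}$ is again the flow of a \emph{linearized} foliated field on $U_{q}$ preserving $\F_{q}$, and such a field is automatically Killing by the skew-symmetry argument of Example \ref{example-linearization}; the paper therefore never needs to know that $\psi$ itself is an isometry. You instead make the isometry of $\psi$ the load-bearing step and then invoke connectedness plus the maximality defining $K_{q}^{0}$, which is cleaner at the group level (you get the homomorphism on all of $K_{p}^{0}$ at once, and the inverse by conjugating back) but more demanding on the geometry of $\psi$. Two small cautions there: your derivation of $\pr$-projectability from ``tangent to $\mathcal{N}$, transverse to $\mathcal{K}$'' is not by itself a valid inference (the correct reason is that $\vec{Y}$, being a linearization, is $h_{\lambda}$-invariant, hence projects to $\vec{Y}|_{B}$); and the isometry of $\psi$ does not follow from Proposition \ref{corollary-connections-tau-affine} as you suggest, since $\nabla^{\tau}$ is metric only as a \emph{partial} connection along $\F_{B}$, while the base curve of $\varphi^{Y}_{t}$ is transverse to $\F_{B}$. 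The correct source is the fact recalled at the end of Section \ref{subsection-linearized foliation} that flows of linearizations of leaf-tangent fields are fiberwise isometries, applied to $\vec{Y}_{0}$, which is tangent to $\widehat{\mathcal{N}}\subset T\F$ --- you do cite this and flag the caveat, so the gap is one of attribution rather than substance. The paper's route is the more robust of the two precisely because it bypasses this metric input.
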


\begin{proof}
First we claim that \emph{if a flow of a  vector field $\vec{Y}$ sends a vector field $\vec{X}^{1}$ to $\vec{X}^{2}$, then the flow of  linearization of  $\vec{Y}$ sends the linearization of $\vec{X}^{1}$ into the linearization of $\vec{X}^{2}$}. In fact, let $\hat{\varphi}_{s}$, $\varphi^{1}_{t}$ and $\varphi^{2}_{t}$ be the flows of the vector fields $\vec{Y}$, $\vec{X}^{1}$ and $\vec{X}^{2}$. From the hypothesis we have that $\varphi_{t}^{2} = \hat{\varphi}_{s} \circ \varphi^{1}_{t} \circ \hat{\varphi}_{-s}$. Note that the flows of $\vec{Y}_{\lambda}$, $\vec{X}^{i}_{\lambda}$ (for $i = 1, 2$) are $\hat{\varphi}_{s,\lambda} = h_{\lambda}^{-1} \circ \hat{\varphi}_{s} \circ h_{\lambda}$ and $\varphi_{t,\lambda}^{i} = h_{\lambda}^{-1} \circ \varphi_{t}^{i} \circ h_{\lambda}$ respectively. Therefore $\hat{\varphi}_{s,\lambda} \circ \varphi^{1}_{t,\lambda} \circ \hat{\varphi}_{-s,\lambda} = \varphi^{2}_{t,\lambda}$. The claim now follows by taking  the limit when $\lambda$ goes to zero.

Therefore if $t \to g_{t}$ is a  flow of a linearized foliated vector field on $U_b$ that preserves $\F_b$, i.e., $g_{t} \in K_{b}^{0}$, then $t \to \hat{\varphi}(g_{t}) := \varphi_{s}^{Y}|_{U_b} \circ g_{t} \circ \big(\varphi_{s}^{Y}|_{U_b}\big)^{-1}$ is also a flow of  a linearized foliated vector field on $U_{\tilde{b}}$ fixing $\F_{\tilde{b}}.$ Therefore $t \to \hat{\varphi}(g_{t})$ is a flow of a Killing vector field that preserves  $\F_{\tilde{b}}$, i.e., it is contained in $K_{\tilde{b}}^{0}$.
\end{proof}

In what follows we  say that a (partial) connection $\nabla^{\tau}$  compatible with the metric of $E=\nu(B)$ is a \emph{$\F$-compatible metric connection} 
if for each curve $\alpha\subset L\subset B$ and  
each parallel field $t\to\xi(t)\in \Gamma(\alpha^{*}E)$ along $\alpha$, we have that $\xi(t)\in L_{\xi(0)}.$

\begin{lemma}
\label{lemma-total-connection}
Assume that $B=\overline{L_q}$ i.e., $B$ is the closure
of a leaf $L_q$. Then there exists a $\overline{\F}$-compatible metric connection
$\nabla^{\overline{\tau}}:\mathfrak{X}(B)\times\Gamma(E)\to \Gamma(E)$  that extends
the $\F$-partial compatible metric connection $\nabla^{\tau}$.
\end{lemma}
\begin{proof}

Our goal  is to find a linear connection $\overline{\mathcal{T}}$ (and hence $TE= \mathcal{K}\oplus \overline{\mathcal{T}}$) 
so that:
\begin{enumerate}
\item[(a)] $\overline{\mathcal{T}}\subset T\overline{\F}$,
\item[(b)] $\mathcal{T}\subset\overline{\mathcal{T}}$.
\end{enumerate}
The fact that $\overline{\F}=\{\overline{L}\}$ is a S.R.F with closed leaves (see \cite{Alexandrino-Radeschi-Molinosconjecture}) and  item (a)  
will assure that $\overline{\mathcal{T}}$  
induces the $\overline{\F}$-compatible  metric connection $\nabla^{\overline{\tau}},$ (recall Example \ref{holonomy-foliation}).
Item (b) will imply that $\nabla^{\overline{\tau}}$ is an extension of the   $\F$-partial compatible metric connection $\nabla^{\tau}.$

Let us consider an open covering $\{U_{\alpha}^{b}\}$ of $B$ where $U_{\alpha}^{b}$ are precompact open sets of $B$. 
Define $U_{\alpha}=\mathrm{Tub}_{\epsilon}(U_{\alpha}^{b})=\pr^{-1}(U_{\alpha}^{b})$ where   $\pr: \mathrm{Tub}_{\epsilon}(B) \to B$ is the project metric. Therefore
$\mathrm{Tub}_{\epsilon}(B)=\cup_{\alpha} U_{\alpha}$ and $U_{\alpha}$ is homothetic invariant. 
We claim that \emph{ there exists a regular homothetic distribution  
$\mathcal{N}_\alpha$ on $U_\alpha$ so that
\begin{itemize}
\item $\mathcal{N}_{\alpha}\subset T\overline{\F},$
\item $TE= \mathcal{T}  \oplus \mathcal{N}_{\alpha} \oplus \mathcal{K},$
\item $\mathcal{N}_{\alpha}|_{B}=\nu(\F|_{B})$.
\end{itemize}}
In fact, let $\{ X_{\alpha}^{i} \} $ be  an orthonormal frame of $\nu(\F|_{U_\alpha})$.
By \cite{Alexandrino-Radeschi-Molinosconjecture}, we can extend these vector fields to vector fields tangent 
to $\overline{\F}$ and then linearize them to produce 
linear independent  vector fields  $\{(X_{\alpha}^{i})^{\ell}\}.$ This process
can be done at least in a smaller tubular neighborhood $\mathrm{Tub}_{\epsilon_0}(U_{\alpha}^{b})$
for $\epsilon_0<\epsilon$. But since linearized vector fields are homothetic invariant,
and the  homothetic transformation $h_\lambda$ are diffeomorphisms, 
 we can extend  $\{(X_{\alpha}^{i})^{\ell}\}$ to linearly independent vector fields
defined on  $\mathrm{Tub}_{\epsilon}(B)$
and set $\mathcal{N}_{\alpha}$ the distribution generated by these vector fields.
Since the homothetic transformation preserves the distrubtion $\mathcal{K}$, $\mathcal{T}$
and send closure leaves to closure leaves, we conclude that the $\mathcal{N}_\alpha$ 
satisfies the desired properties.   

Now we define, for each $\alpha$, a metric $g_\alpha$ on $U_\alpha$ so that:
\begin{enumerate}
\item[(i)]  $\mathcal{K}$ is orthogonal (with respect to $g_\alpha$) to the distribution
$\mathcal{T}\oplus \mathcal{N}_\alpha$ , which is a regular distribution contained in $T\overline{\F}$;
\item[(ii)] $g|_{\mathcal{K}}=g_{\alpha}|_{\mathcal{K}}.$
\end{enumerate}
Property (i) implies that $\nu(\overline{\F})$ is contained in $\mathcal{K}$ 
and property (ii)  that the vector space $\nu(\overline{L})$ does not depend on $\alpha$. 

Finally we define $\tilde{g}=\sum_{\alpha} \varphi_{\alpha} g_{\alpha},$
where $\{\varphi_\alpha\}$ is a partition of unity subordinated to $\{U_\alpha\}$. 
 Set $\widetilde{\mathcal{\overline{T}}}$ the orthogonal complement (with respect to $\tilde{g}$)
of $\mathcal{K}$. From property (i) and definition of $\tilde{g}$,
 we infer that $\mathcal{T}$ is $\tilde{g}$ orthogonal to $K$ and hence
\begin{equation}
  \label{eq-1-lemma-total-connection}
\mathcal{T}\subset \widetilde{\mathcal{\overline{T}}}. 
\end{equation}
As we remarked before,  
the normal distribution of $\overline{\F}$ does not depend on $\alpha$ and 
is contained in $\mathcal{K}$,  and hence  the normal distribution 
$\nu(\overline{\F})$ (with respect to $\tilde{g}$) is contained in $\mathcal{K}$.
The fact that $\nu(\overline{\F}) \subset \mathcal{K}$ and  $\mathcal{K}$ is the
orthogonal complement (with respect $\tilde{g}$) of 
$\widetilde{\mathcal{\overline{T}}}$ imply that 
\begin{equation}
\label{eq-2-lemma-total-connection}
\widetilde{\mathcal{\overline{T}}}\subset T\overline{\F}. 
\end{equation}
Set  $\mathcal{\overline{T}}:=(\widetilde{\mathcal{\overline{T}}})^{\ell}.$
Eq \eqref{eq-1-lemma-total-connection}, \eqref{eq-2-lemma-total-connection}
and the fact  that  $(T\overline{\F})^{\ell}\subset T\overline{\F}$
and $\mathcal{T}^{\ell}=\mathcal{T}$ imply 
$$\mathcal{T}=\mathcal{T}^{\ell}\subset (\widetilde{\mathcal{\overline{T}}})^{\ell}=
\overline{\mathcal{T}}=(\widetilde{\mathcal{\overline{T}}})^{\ell} \subset (T\overline{\F})^{\ell}\subset T(\overline{\F}) $$
and hence items (a) and (b) are fulfilled.

\end{proof}

\begin{remark}
In Lemma \ref{lemma-total-connection}, the distribution $\mathcal{N}$ of 
the triple $(\mathcal{K},\mathcal{T},\mathcal{N})$ associated to $\F$ 
can been chosen to be  a distribution  tangent 
to the leaves of $\overline{\F}$.  
It is defined as the distribution in $\overline{\mathcal{T}}$
so that $\overline{\mathcal{T}}=\mathcal{T}\oplus\mathcal{N}$
and  $\pr_{*}(\mathcal{N})=\nu(\F|_{B}).$
\end{remark}

\subsubsection{The local closure foliation}

Let us now recall the construction of the foliation $\widehat{\F}^\ell$, that has the following properties: $\Flin\subset \widehat{\F}^\ell \subset \overline{\Flin}$ and the restriction $\widehat{\F}^\ell_b$ to each $\pr$-fiber $U_b$ is homogeneous and closed since it is formed by the orbits of $\overline{K_{b}^{0}}$. A leaf $\widehat{L}_q$ of $\widehat{\F}^\ell$ through $q$ is defined as:
\[
\widehat{L}_q = \left\{ \Phi(k \cdot q) \mid \Phi \in \mathsf{D}, \, k \in \overline{K_{b}^{0}} \right\}
\]
The foliations $\Flin$ and $\widehat{\F}^\ell$ are SRF with respect to the metric which turns $\mathcal{K}$ orthogonal to $\mathcal{T} \oplus \mathcal{N}$, preserving the metric of each component, i.e., the flat metric induced by exponential map in $\mathcal{K}$ and the metric $\pr^\ast g_B$ in $\mathcal{N} \oplus \mathcal{T}$ where $g_B$ is the restriction of $g$ on $B$ (see figure \ref{TNK}).

More precisely $\widehat{\F}^\ell$ is an \emph{orbit like foliation}, that is, a SRF such that each infinitesimal foliation is a SRF given by orbits of a compact subgroup of $O(n)$ (see \cite{Alexandrino-Radeschi-Molinosconjecture} for the definition and properties).

\begin{example}
An illustration of the concept of orbit-like foliation can be extracted from example \ref{holonomy-foliation}. Consider in this example a holonomy foliation $\F^\tau$ determined by a linear connection $\nabla^\tau: \mathfrak{X}(B) \times \Gamma(E) \to \Gamma(E)$ such that the holonomy groups are all compact (e.g. a Riemannian connection in a Riemannian manifold and a normal connection of a submanifold embedded in a Euclidean space as presented in \cite{Berndt-Console-Olmos}). Since the intersection of the leaves of $\F^\tau $ with the fibers of $E$ are given by orbits of the holonomy groups, in this particular case, the holonomy foliation already is an orbit-like foliation from which follows that $\F^\tau = (\F^\tau)^\ell = (\widehat{\F^\tau})^\ell$.
\end{example}

\subsection{A few facts about Lie groupoids} \label{section-facts-groupoids}

Recall that a Lie groupoid is composed of two manifolds $\mathcal{G}_1$ and $\mathcal{G}_0$ where elements of $\mathcal{G}_1$ are thought of as arrows between elements of $\mathcal{G}_0$. The maps $s,t: \mathcal{G}_1 \to \mathcal{G}_0$ which associate to an arrow $g \in \mathcal{G}_1$ its source and its targets are required to be a surjective submersions. It then follows that the space
\[\mathcal{G}_2 = \{(g,h) \in \mathcal{G}_1\times \mathcal{G}_1: s(g)=t(h)\}\]
of composable arrows is a manifold and there is a smooth multiplication map
\[m: \mathcal{G}_2 \longrightarrow \mathcal{G}_1, \quad m(g,h) = gh\]
which is associative and satisfies $s(gh) = s(h)$ and $t(gh) = t(g)$. A Lie groupoid $\mathcal{G}_1$ also comes equipped with a smooth embedding of $\mathcal{G}_0$ into $\mathcal{G}_1$
\[u: \mathcal{G}_0 \to \mathcal{G}_1, \quad u(x) = 1_x\]
which allows us to view each $x \in M$ as an identity arrow $1_x$ whose source and target is $x$. Needless to say, this arrow acts as an identity:
\[1_xh = h, \text{ and } g1_x = g \text{ for all } h \in t^{-1}(x), \text{ and } g \in s^{-1}(x).\]
Finally, there is a diffeomorphism $i: \mathcal{G}_1 \to \mathcal{G}_1$ which associates to each arrow $g \in \mathcal{G}_1$ its inverse arrow $i(g) = g^{-1}$ which satisfies
\[s(g^{-1}) = t(g),\quad t(g^{-1}) = s(g), \quad gg^{-1} = 1_{t(g)}, \quad g^{-1}g = 1_{s(g)}.\]
We will denote a Lie groupoid by $\mathcal{G}_1 \rightrightarrows \mathcal{G}_0$.

Any Lie groupoid $\mathcal{G} = \mathcal{G}_1 \rightrightarrows \mathcal{G}_0$ induces a foliation on  $\mathcal{G}_0$ whose leaves are the connected components of  \emph{the orbits of} $\mathcal{G}$, $\mathcal{O}_x = \{t(s^{-1}(x))\}$.

Some examples of Lie groupoids will be important for us in this paper. We present them here.

\begin{example}
An example of a Lie groupoid that will be important throughout this paper is the \emph{Holonomy groupoid} 
$\mathrm{Hol}(\F) \rightrightarrows M$ of a regular foliation $\F$ on $M$. 
An arrow in $\mathrm{Hol}(\F)$ is a class of a path in a leaf of $\F$, where the equivalence relation identifies leafwise homotopic paths and paths inducing the same germ of diffeomorphisms sliding transversals along the paths.
The source of $[\alpha] \in \mathrm{Hol}(\F)$ is $\alpha(0)$, the starting point of $\alpha$, and the target of $[\alpha]$ is its endpoint $\alpha(1)$. 
Multiplication is given by concatenation  of paths. 
The identity arrow at $x \in M$ is the homotopy class of the constant path at $x$, and inversion is given by
\[[\alpha]^{-1} = [\bar{\alpha}], \text{ where } \bar{\alpha}(t) = \alpha(1-t).\]
The orbits of $\mathrm{Hol}(\F)$ are precisely the leaves of $\F$.
\end{example}

\begin{example}
A Lie groupoid over a point is just a Lie group. More generally, a bundle of Lie groups is just a Lie groupoid for which $s = t$. In this case, each $s$-fiber inherits the structure of a Lie group and we can view $\mathcal{G}_1$ as a smooth family of Lie groups parameterized by $\mathcal{G}_0$. 
\end{example}

\begin{example}\label{example:action-groupoid}
Let $\mu:G\times M\to M$ be an action. Then the \emph{action groupoid or transformation groupoid} 
is defined as $\mathcal{G}_1=G\times M$ and $\mathcal{G}_0=M$
with source map $s(g,x)=x$, target map $t(g,x)=\mu(g,x)$ and unity map $u(x)=(e,x)$. The product map is the composition, i.e,
$m\big((h_2,y),(h_1,x)\big)=(h_{2}h_{1},x)$ and the inverse map is $i(h_1,x)=(h_{1}^{-1},\mu(h_{1},x)).$ 
\end{example}

\begin{example}
Associated to each $G$-principal bundle $\pi: P\to B$ there exists a transitive Lie groupoid with isotropy equals to $G$ called the \emph{gauge groupoid} of $P$.
This groupoid can be realized as the quotient of $P\times P$ by the diagonal action of $G$, i.e., $\mathcal{G}_1:=(P\times P)/G$ and $\mathcal{G}_0:=B$ with structure determined by
\[s([p,q]) = \pi(q), \quad t([p,q]) = \pi(p), \quad [p,q]\cdot[q,r] = [p,r].\]
\end{example}

\begin{example}\label{ex:gl-groupoid}
Similar to vector spaces, a vector bundle $E\to B$ has a \emph{general linear groupoid} $GL(E)\rightrightarrows B$
whose arrows are the linear isomorphisms between the fibers.
This groupoid can be realized as the gauge groupoid of the frame bundle $F(E)\to B$.

In this paper we usually assume that $E$ is an Euclidean vector bundle. In this case we reduce the general linear groupoid of $E$ to obtain the \emph{orthogonal linear groupoid} $\mathcal{O}(E)\rightrightarrows M$, whose arrows are the linear isometries between the fibers of $E$. Equivalently, $\mathcal{O}(E)$ can be realized as the gauge groupoid associated to the orthogonal frame bundle of $E$. 
\end{example}

In this paper we will be interested in applying two general constructions for Lie groupoids to the specific setting coming from the study of SRFs, namely, we will need to take the quotient of a Lie groupoid by a free and proper action of a Lie group $G$ through automorphisms, and we will need to consider the transformation groupoid associated to a representation of a Lie groupoid on a vector bundle. We now explain these constructions.

Let $G$ be a Lie group and $\mathcal{G} = \mathcal{G}_1 \rightrightarrows \mathcal{G}_0$ be a Lie groupoid. An \emph{action of $G$ on $\mathcal{G}$ by Lie groupoid automorphisms} is an action of $G$ on $\mathcal{G}_1$ and on $\mathcal{G}_0$ such that for each $a \in G$ the map
\[\xymatrix{
\mathcal{G}_1 \ar@<0.25pc>[d] \ar@<-0.25pc>[d] \ar[rr]^{\Psi_a} & & \mathcal{G}_1 \ar@<0.25pc>[d] \ar@<-0.25pc>[d] \\
\mathcal{G}_0 \ar[rr]_{\psi_a} & & \mathcal{G}_0}
\]
is a Lie groupoid morphism, i.e., commutes with all of the structure maps. The action is said to be \emph{free and proper} if the action on $\mathcal{G}_1$ is free and proper. We remark that since $s: \mathcal{G}_1 \to \mathcal{G}_0$ is a surjective submersion with a $G$-invariant global section $u: \mathcal{G}_0 \to \mathcal{G}_1$, it follows that the action of $G$ on $\mathcal{G}_0$ is also free and proper.

In this paper the action that will appear naturally is a right action.

\begin{proposition} \label{prop:quotient}
Let $G$ be a Lie group which acts freely and properly on a Lie groupoid $\mathcal{G}_1 \rightrightarrows \mathcal{G}_0$ through automorphisms. Then $\mathcal{G}_1/G \rightrightarrows \mathcal{G}_0/G$ has an induced structure of a Lie groupoid.
\end{proposition}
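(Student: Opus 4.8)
The plan is to build the Lie groupoid structure on $\mathcal{G}_1/G \rightrightarrows \mathcal{G}_0/G$ by descending each structure map of $\mathcal{G}$ along the quotient projections, using that all of them are $G$-equivariant because $G$ acts by groupoid automorphisms. First I would produce the two quotient manifolds. Since the action on $\mathcal{G}_1$ is free and proper, the Quotient Manifold Theorem gives a smooth manifold $\mathcal{G}_1/G$ with $q_1 \colon \mathcal{G}_1 \to \mathcal{G}_1/G$ a surjective submersion (a principal $G$-bundle). As remarked just before the statement, freeness of the action on $\mathcal{G}_1$ is equivalent to freeness on $\mathcal{G}_0$, and properness on $\mathcal{G}_1$ forces properness on $\mathcal{G}_0$; hence $\mathcal{G}_0/G$ is likewise a smooth manifold with surjective submersion $q_0 \colon \mathcal{G}_0 \to \mathcal{G}_0/G$.

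Next I would descend $s,t,u,i$. Each of these is equivariant (e.g.\ $\psi_a \circ s = s \circ \Psi_a$), so $q_0 \circ s$, $q_0 \circ t$, $q_1 \circ u$ and $q_1 \circ i$ are constant on $G$-orbits and therefore factor through smooth maps $\bar s, \bar t, \bar u, \bar i$ by the universal property of quotients by surjective submersions. That $\bar s$ (and $\bar t$) is a surjective submersion follows from the relation $\bar s \circ q_1 = q_0 \circ s$: the right-hand side is a submersion, and since $q_1$ is a surjective submersion, a pointwise chase of differentials shows $d\bar s$ is surjective everywhere. The map $\bar i$ is a diffeomorphism, its inverse being the descent of $i^{-1}=i$. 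For the unit I would use that $s \circ u = \mathrm{id}_{\mathcal{G}_0}$ descends to $\bar s \circ \bar u = \mathrm{id}_{\mathcal{G}_0/G}$; thus $\bar u$ is a smooth section of the submersion $\bar s$, hence an injective immersion that is a homeomorphism onto its image, i.e.\ an embedding.

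The crux of the argument, and the step I expect to be the main obstacle, is the multiplication. Here $\mathcal{G}_2 \subset \mathcal{G}_1 \times \mathcal{G}_1$ is a $G$-invariant embedded submanifold (invariance uses equivariance of $s,t$), the restricted diagonal action is again free and proper, so $\mathcal{G}_2/G$ is a smooth manifold. I would then study the natural smooth bijection
\[
\Theta \colon \mathcal{G}_2/G \longrightarrow (\mathcal{G}_1/G) \times_{\mathcal{G}_0/G} (\mathcal{G}_1/G), \qquad [(g,h)] \longmapsto ([g],[h]),
\]
where the target is the manifold of composable pairs (a manifold since $\bar s$ is a submersion). Surjectivity uses the automorphism property: if $\bar s[g]=\bar t[h]$ there is $a\in G$ with $\psi_a(s(g))=t(h)$, so $(\Psi_a(g),h)$ is genuinely composable and maps to $([g],[h])$. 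Injectivity is exactly where freeness on $\mathcal{G}_0$ enters: if $(g_1,h_1),(g_2,h_2)\in\mathcal{G}_2$ have $\Psi_a(g_1)=g_2$ and $\Psi_b(h_1)=h_2$, then comparing $s(g_2)=t(h_2)$ shows $\psi_{a^{-1}b}$ fixes $s(g_1)$, so $a=b$ and the two points lie in a single $G$-orbit. A dimension count gives $\dim \mathcal{G}_2/G = 2\dim\mathcal{G}_1 - \dim\mathcal{G}_0 - \dim G = \dim\big((\mathcal{G}_1/G)\times_{\mathcal{G}_0/G}(\mathcal{G}_1/G)\big)$, so it remains to check that $\Theta$ is a local diffeomorphism; once this is verified the smooth bijection $\Theta$ is a diffeomorphism, and the equivariant map $m$ (with $\Psi_a \circ m = m \circ (\Psi_a \times \Psi_a)$, the defining property of an automorphism action) descends to a smooth multiplication $\bar m$ on the composable pairs.

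Finally I would verify the groupoid axioms. Associativity, the unit laws, the inverse laws, and the source/target identities for products ($\bar s(\bar m([g],[h])) = \bar s[h]$, etc.) all follow from the corresponding identities on $\mathcal{G}$ together with the surjectivity of $q_1$ and $q_0$: each axiom is an equality of compositions of equivariant maps, which therefore passes to the quotient. This shows $\mathcal{G}_1/G \rightrightarrows \mathcal{G}_0/G$ is a Lie groupoid, completing the proof. The only genuinely delicate point, as noted, is the identification $\mathcal{G}_2/G \cong (\mathcal{G}_1/G)\times_{\mathcal{G}_0/G}(\mathcal{G}_1/G)$ and the consequent smoothness of $\bar m$; everything else is a formal descent along the principal bundles $q_1$ and $q_0$.
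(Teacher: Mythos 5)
Your proposal is correct and follows essentially the same route as the paper: descend all structure maps along the quotient projections using $G$-equivariance, the only delicate point being the smooth identification of composable pairs (your diffeomorphism $\Theta:\mathcal{G}_2/G\to(\mathcal{G}_1/G)\times_{\mathcal{G}_0/G}(\mathcal{G}_1/G)$ is equivalent to the paper's assertion that $p_2:\mathcal{G}_2\to(\mathcal{G}/G)_2$ is a surjective submersion, and your explicit composability formula reproduces the paper's $[g][h]=[g(ha)]$). The one step you leave ``to check''---that $\Theta$ is a local diffeomorphism---is precisely the point the paper also asserts without detail, so nothing essential is missing relative to the published argument.
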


\begin{proof}
We define the source and target of $\mathcal{G}/G$ in the obvious way:
\[\bar{s}[g] = [s(g)], \quad \bar{t}[g] = [t(g)].\]
The maps $\bar{s}$ and $\bar{t}$ are well defined because of the $G$-equivariance of $s$ and $t$. Moreover, they are smooth surjective submersions because $s$, $t$ and the projection $p: \mathcal{G} \to \mathcal{G}/G$ are smooth surjective submersions.

Next we define the multiplication of $\mathcal{G}/G$. A pair $([g], [h])$ of arrows of $\mathcal{G}/G$ is composable iff $[s(g)] = [t(h)]$. Therefore, there exists a unique $a \in G$ such that $s(g) = t(h)a = t(ha)$. We define
\[[g][h] = [g(ha)].\]
The reader can easily check that the multiplication map is well defined and associative.

Moreover, we note that the multiplication $\bar{m}: (\mathcal{G}/G)_2 \to \mathcal{G}/G$ sits in the following commutative diagram
\[\xymatrix{
\mathcal{G}_2 \ar[d]_{p_2} \ar[r]^m & \mathcal{G}\ar[d]_p \\
(\mathcal{G}/G)_2 \ar[r]_{\bar{m}} & \mathcal{G}/G,}
\]
where
\[p_2: \mathcal{G}_2 \to (\mathcal{G}/G)_2, \quad p_2(g,h) = ([g],[h])\]
is a surjective submersion. It follows that $\bar{m}$ is smooth.

The other structure are defined similarly and are clearly smooth. They are defined by
\[[g]^{-1} = [g^{-1}], \quad \bar{u}([x]) = [1_x].\]

Finally, we note that the unit and inverses of the quotient groupoid indeed satisfy the properties in the definition of a Lie groupoid. For example, if $\bar{s}([g]) = [x]$, then $s(g) = xa$ and
\[[g][1_x] =[g (1_x a)] = [g1_{xa}] = [g].\]
The other properties are proven similarly by direct computations.
\end{proof}

We next describe the transformation groupoid associated to a representation of a Lie groupoid $\mathcal{G}_1\rightrightarrows \mathcal{G}_0$ on a vector bundle $\pi:E \to \mathcal{G}_0$. Recall that a \emph{representation of $\mathcal{G}_1\rightrightarrows \mathcal{G}_0$ on a vector bundle $\pi: E \to \mathcal{G}_0$} is a smooth map
\[\psi: \mathcal{G}_1 \times_{\mathcal{G}_0} E \longrightarrow E \quad \psi(g,e) = ge,\]
where $\mathcal{G}_1 \times_{\mathcal{G}_0} E = \{(g, v) \in \mathcal{G}_1 \times E : s(g) = \pi(v)\},$ satisfying the following properties: 
\begin{itemize}
\item $\psi(g, \cdot) = \psi_g: E_{s(g)} \to E_{t(g)}$ is a linear isomorphism for all $g \in \mathcal{G}_1$;
\item $1_{\pi(v)}v = v$ for all $v \in E$;
\item $g(hv) = (gh)v$ for all $(g,h) \in \mathcal{G}_2$ and all $v \in E_{s(h)}$.
\end{itemize}
Equivalently, a representation of $\mathcal{G}$ on $E$ can be recast a Lie groupoid morphism $\mathcal{G} \to \mathrm{GL}(E)$.

Given a representation of $\mathcal{G}_1\rightrightarrows \mathcal{G}_0$ on a vector bundle $\pi: E \to \mathcal{G}_0$, one obtains a new groupoid $\mathcal{G} \ltimes E$ called the \emph{transformation Lie groupoid of the representation}. Its structure is described bellow.

The manifolds of arrows and objects are
\[(\mathcal{G} \ltimes E)_1 = \mathcal{G}_1 \times_{\mathcal{G}_0} E, \quad (\mathcal{G} \ltimes E)_0 = E. \]
Its source, target and multiplication is given by
\[s(g,v) = v, \quad t(g,v) = gv, \quad (g, hv)(h,v) = (gh, v).\]
Finally, its unit and inverse is given by
\[1_v = (1_{\pi(v)},v), \quad (g,v)^{-1} = (g^{-1}, gv).\]
An example relevant to the theory of SRFs will be given in the next section.

\begin{remark}
For any saturated submanifold $B$ of $\mathcal{G}_0$, there is a canonical representation of the restriction groupoid $\mathcal{G}_B \rightrightarrows B$ on the normal bundle $\nu(B)$ called \emph{normal representation}.
The transformation groupoid of the normal representation is a \emph{local linear model} for $\mathcal{G}$ around $B$.
There are linearization results identifying $\mathcal{G}$ around $B$ with the local linear model, for instance if $\mathcal{G}$ is proper \cite{Crainic-Struchiner,Fernandes-Hoyo}.
Going back to SRFs, we will show in Proposition \ref{proposition-groupoid-linear-foliation} that the linearized foliation $\Flin$ is the orbit foliation of a representation, and this may be interpreted as the local linear model for a possibly groupoid presenting $\F$.
\end{remark}

\subsection{A few facts about Lie algebroids} \label{section-facts-algebroids}
In Section \ref{section-rotate-translate-groupoid} we will also make use of the infinitesimal object associated to a Lie groupoid, known as a \emph{Lie algebroid},  and some elements of its integration theory which we recall here. A Lie algebroid is a vector bundle $\pi: A \to M$ endowed with:
\begin{itemize}
\item a Lie bracket $[\cdot,\cdot]$ on the space $\Gamma(A)$ of sections of $A$;
\item a vector bundle map $\rho: A \to TM$ known as the \emph{anchor of the Lie algebroid},
\end{itemize}
such that the following Leibniz identity holds for all sections $\alpha, \beta \in \Gamma(A)$ and for all smooth maps $f \in \mathrm{C}^\infty(M)$
\[[\alpha, f\beta] = f[\alpha, \beta] + \rho(\alpha)(f)\beta.\]

Every Lie groupoid $\mathcal{G}_1 \rightrightarrows \mathcal{G}_0$ determines a Lie algebroid $A \to \mathcal{G}_0$. The construction of $A$ is analogous to the construction of the Lie algebra of a Lie group, but taking into account that on a Lie groupoid there are many identity elements, and that right translation by an element $g \in \mathcal{G}_1$ is only defined on the source fiber of $t(g)$. Explicitly, one takes the vector bundle $A \to \mathcal{G}_0$ to be the pullback by $u$ of the kernel of $ds$. It then follows that the sections of $A$ identify with the space of right invariant vector fields on $\mathcal{G}_1$ and this identification induces a Lie bracket on $\Gamma(A)$. The anchor map is given by the restriction of $dt$ to $A$. A simple verification shows that one obtains in this way a Lie algebroid out of a Lie groupoid.

\begin{example}
Let $\mu:G\times M\to M$ be an action and let $\mathcal{G}_1=G\times M \rightrightarrows M = \mathcal{G}_0$ be the action groupoid of Example \ref{example:action-groupoid}. Following  the previous discussion we conclude that the associated
Lie algebroid $A\to \mathcal{G}_0$, must be $A=\mathfrak{g}\times M\to M$. Here it is clear that Lie bracket on $\Gamma(A)$
is induced by  right invariant vector fields on $G$ and their infinitesimal action on $M$. Moreover, the anchor map $d t:A\to TM$ defined as $ dt (\phi_x)=d\mu_x \phi=\frac{d}{d t}\mu(\exp(t\phi),x)|_{t=0}$
induces a Lie algebra morphism between $\Gamma(A)$ with the  \emph{fundamental vector fields} $x\to\phi^{\#}(x):=d\mu_x \phi \in \Gamma(TM)$,
see Figure \ref{holonomy}. 
\end{example}

\begin{figure}[!htb]
\centering
\includegraphics[width=0.9\textwidth]{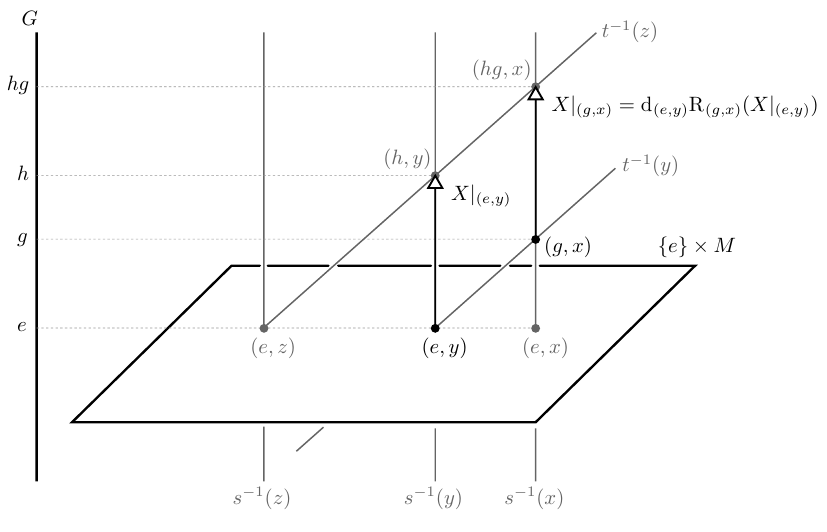}
\caption{An illustration of the action groupoid and its respectively Lie algebroid. Here $z = h g \, x$, $y = g \, x$ and $X \in \Gamma(A)$.}
\label{holonomy}
\end{figure}

Here are some other examples of Lie algebroids which will be relevant for us in this paper:

\begin{example}
The tangent distribution of a regular foliation $\F$ gives rise to a subbundle $T\F$ of $TM$.
Since this distribution is involutive the bracket of sections of $T\F$ is again a section of $T\F$. This bracket, together with the inclusions $T\F \to TM$ as an anchor form a structure of Lie algebroid to $T\F$.
\end{example}

\begin{example}
Another example of a Lie algebroid that will be used in this paper is that of a bundle of algebras.
A bundle of Lie algebras is just a Lie algebroid $A\to M$ for which the anchor satisfies $\rho\equiv 0$.
In this case, each fiber inherits the structure of a Lie algebra and we can view $A$
as a smooth family of Lie groups parameterized by $M$.
\end{example}

\begin{example}
The Lie algebroid of the general linear groupoid of a vector bundle $E \to M$ (Example \ref{ex:gl-groupoid}) is called the \emph{general linear algebroid} of $E$, and is denoted by  $\mathfrak{gl}(E)$.

As a vector bundle, $\mathfrak{gl}(E)$ fits into a short exact sequence
\begin{equation}\label{eq:exact-sequence}0\longrightarrow E^{*}\otimes E\longrightarrow \mathfrak{gl}(E) \longrightarrow TM \longrightarrow 0.\end{equation}
Its space of sections can be identified with the space of degree 1 derivations of the vector bundle $E$, i.e., the space of  linear operator $D:\Gamma(E)\to \Gamma(E)$ such that there exists a vector field $X_D$ in $M$, satisfying $D(fs)=X_D(f)s + fD(s)$ for all sections $s \in \Gamma(E)$ and functions $f \in \mathrm{C}^\infty(M)$. The Lie bracket os two derivations is the commutator bracket.

We observe that the vector bundle splittings of the exact sequence \eqref{eq:exact-sequence} are in 1-1 correspondence with linear connections on $E$. In fact, a connection $\nabla$ produces the splitting $\sigma(X)=\nabla_X$. When $E$ is an Euclidean vector bundle, an analogous construction can be made so that the splittings correspond bijectively to connections compatible with the fiberwise metric. 

The general linear algebroid of $E$ can also be realized as the \emph{Atiyah algebroid} of the frame bundle of $E$. We recall that the Atiyah algebroid of a principal $G$-bundle $\pi:P \to M$ is $A = \frac{TP}{G}$ as a vector bundle (over $M$). Its Lie bracket on the space of sections is obtained by identifying sections of $A$ with $G$-invariant vector fields on $P$, and its anchor is induced by $d\pi: TP \to TM$.

For more details see \cite{Crainic-Moerdijk}.
\end{example}

A Lie algebroid is called \emph{integrable} if it is isomorphic to the Lie algebroid of a Lie groupoid. In contrast to the usual Lie theory for Lie groups, not every Lie algebroid is integrable, but the obstructions for integrability are well known (see \cite{Crainic-Fernandes}). In this paper we will only need the fact that every Lie subalgebroid of an integrable Lie algebroid is itself integrable. Even though this result follows from the general obstruction theorem of \cite{Crainic-Fernandes}, we will use here the approach of \cite{Moerdijk-Mrcun-subalgebroid} where an explicit description of the integrating groupoid is given as follows.

Let $A \to M$ be a Lie subalgebroid of $A' \to M$, and $\mathcal{G} = \mathcal{G}_1\rightrightarrows \mathcal{G}_0 = M$ be a Lie groupoid which integrates $A'$. Then for each $g \in \mathcal{G}_1$ we may consider the subspace of $\ker ds$ obtained by right translating $A_t(g) \subset \ker d_{1_{t(g)}}s$ to $\ker d_gs$. In this way, one obtains a $s$-vertical involutive  distribution on $T\mathcal{G}_1$, i.e., a regular foliation $\F^A$ on $\mathcal{G}_1$. It then follows that the holonomy groupoid of this foliation is a Lie groupoid $\mathrm{Hol}(\F^A) \rightrightarrows \mathcal{G}_1$. Moreover, $\mathcal{G}$ acts on  $\mathrm{Hol}(\F^A)$ by automorphisms and the quotient $\mathcal{H} \rightrightarrows \mathcal{G}_0$ is a Lie groupoid integrating $A$. This Lie groupoid comes equipped with a Lie groupoid immersion $i: \mathcal{H} \to \mathcal{G}$ whose derivative restricts to the inclusion of $A$ into $A'$. For further details we refer to \cite{Moerdijk-Mrcun-subalgebroid}.

\begin{remark}
It is also proven in \cite{Moerdijk-Mrcun-subalgebroid} that the groupoid morphism $i: \mathcal{H} \to \mathcal{G}$ is injective if and only if the holonomy of the foliation $\F^A$ is trivial. 
\end{remark}


\section{Semi-local Models of SRF} \label{section-semi-local-modelsofSRF}

In this section we prove Theorem \ref{theorem-semi-local-model}. Using the results in Section \ref{section-facts-SRF} it suffices to prove the proposition below. Before we do so, let us stress some notation. First, denote the generalized holonomy foliation defined in Example \ref{generalized-holonomy-foliation} by $\F (\nabla^{\tau},\F^{E})$. Now remember that given $(M, g, \F)$ a SRF and $B \subset M$ a closed saturated submanifold contained in a stratum, it is possible to consider the linearized foliation $\Flin$ on a $\epsilon$-tubular neighborhood $U$ around $B$ and for each $b \in B$ we denoted by $\F_b$ the infinitesimal foliation on $\nu_b B$ obtained by homothetic extension of the foliation $(\exp^\nu)^{-1}(\F \cap U_b)$. By an abuse of notation we will be denoted by $\Flin$ the homothetic extension of $(\exp^{\nu})^{-1}(\Flin)$ on $\nu B$.

\begin{proposition} \label{lemma-linearized-vector-linearfoliation}
Let $(M, g, \F)$ be a SRF, $B \subset M$ a closed saturated submanifold contained in a stratum, $\mathcal{T}$ the distribution on $E = \nu B$ described in section \ref{subsection-3-distribution} and $\F^{E}$ the singular foliation on $E$ fiberwise determined by the infinitesimal foliations of $\F$. Then 
\begin{enumerate}
\item[(a)] $\F (\nabla^{\tau}, \F^{E}) = \F$.
\item[(b)]$\F (\nabla^{\tau}, \F^{E})^\ell = \Flin$.
\end{enumerate}
\end{proposition}

\begin{proof}

\

\begin{enumerate}

\item[(a)] 

$\F (\nabla^{\tau}, \F^{E}) = \F$ follows direct from the definition of 
$\F^{E}$ (i.e., $\F^{E}$ is the foliation which leaves are leaves of $\F_{b}$, $\forall b\in B$)
and the fact that $\nabla^{\tau}$ is a  $\F$-invariant
connection (i.e.,  for any $v\in E$
and any $\F_{B}$-leafwise path $\alpha$ the 
$\mathcal{T}$-horizontal curve $t\to \mathcal{P}_{\alpha}^{t}(v)$
is contained in $L_v$).

\item[(b)] Since the distribution $\mathcal{T}$ is tangent to $\Flin$ and the orbits of $K_{b}^{0}$ are contained in $\Flin$ we conclude that $\F (\nabla^{\tau}, \F^{E})^\ell \subset \Flin$.

Now we want to prove that $\Flin \subset \F (\nabla^{\tau}, \F^{E})^\ell$, i.e., that flows of linearized vector fields are contained in $\F (\nabla^{\tau}, \F^{E})^\ell$.

Let $\varphi_{t}$ be the flow of a linearized vector field tangent to $\F$, $b_0 \in B$, $v_{0} \in E_{b_0}$ and let $P_{b_0}$ be a plaque through $b_0$ in a normal (geodesic) coordinate system, so that for each $b \in P_{b_0}$ one can associate a unique geodesic segment $\gamma_{b_0, b} \subset P_{b_0}$ connecting $b$ to $b_0$. Consider $\mathcal{P}_{\gamma_{b_0, b_t}}: E_{b_t} \to E_{b_0}$ the parallel transport along $\gamma_{b_0, b_t}$ where $b_t = \pi (\varphi_{t}(v_0))$ and $\pi: E|_{P_{b_0}} \to P_{b_0}$ the base point projection.

Then $t \to \mathcal{P}_{\gamma_{b_0, b_t}} \circ \varphi_{t} =: k_t$ is a curve of isometries of $E_{b_0}$ that fixes $\F_{b_0}$ and such that $k_0 = \text{Id}$. Therefore $t \to k_t$ is a curve in $K_{b_0}^{0}$ starting at the identity. It follows that $\varphi_{t}(v_0) = \mathcal{P}_{(\gamma_{b_0, b_t})^{-1}} (k_t (x_0)) \in \F (\nabla^{\tau}, \F^{E})^\ell$. This concludes the proof.
\end{enumerate}
\end{proof}


\section{Lie Groupoid Structures} \label{section-lie-groupoid-structure}

In this section we expose and discuss the Lie groupoid whose orbits are the leaves of the foliations $\F^{\ell}$ and $\widehat{\F}^\ell$, and in particular we prove Theorem \ref{theorem-linear-holonomy-groupoid} (this is done in  Theorem \ref{proposition-groupoid-linear-foliation} bellow). After the proof we exemplify our construction in two extreme cases: regular foliations around a closed leaf, and around a fixed point of a singular foliation. We hope that these examples will help the reader in understanding the main theorem.
We also presented here the proof of Theorem \ref{foliated-Ambrose-and-Singer}.

\subsection{The Holonomy Groupoid of $\Flin$}

\begin{theorem} \label{proposition-groupoid-linear-foliation}
Let $\F$ be a singular Riemannian foliation on a complete manifold $(M,g)$, $B$ be a closed saturated submanifold contained in a stratum and $U$ a saturated $\epsilon$-tubular neighbourhood of $B$. Then the leaves of the foliations  $\Flin$ and $\widehat{\F}^\ell$ are orbits of a Lie groupoid.
\end{theorem}

\begin{proof}
Let us prove that the leaves of $\Flin$ are orbits of a Lie groupoid. A similar proof holds for the foliation $\widehat{\F}^\ell$.

We denote by $E$ the normal bundle $\nu(B)$ of $B$. Set $\F^{\ell}_{0}$ the homothetic extensions of $(\exp^{\nu})^{-1}(\F^{\ell})$ to $E$. Recall that for each linearized flow on $U$ tangent to $\F$  we can associate a flow $t\to \varphi_{t}$ on $E$ so that $\varphi_{t}: E_{b} \to E_{\varphi_{t}(b)}$ is an isometry (which will also be called the \emph{linearized flow}) for each $b \in B$ where $\varphi_t(b)$ makes sense. The singular foliation $\F^{\ell}_{0}$ are the orbits of the pseudo-group generated by these flows.

Let $O(E)$ be the orthogonal frame bundle associated to $E\to B$. 
Note that each linearized flow $t\to \varphi_{t}$ on $E$ induces a flow $t \to \varphi_{t}$ on $O (E)$. 
Let $\widetilde{\F} = \{\widetilde{L}_{\xi_b}\}_{\xi_b \in O (E)}$ be the singular foliation obtained by compositions of these lifted flows. 
Let $\mathcal{H}^{\tau}$ be the horizontal distribution on $O(E)$ along $T \F_B$ induced by the partial linear connection $\nabla^{\tau}$ described in Proposition \ref{proposition-connections-tau-affine}.
Then for each frame $\xi_{b}\in O(E)_{b}$ we have:
\begin{equation}
\label{eq-1-lemma-groupoid-linear-foliation}
T_{\xi_{b}}\widetilde{L}_{\xi_{b}}=\mathcal{H}^{\tau}_{\xi_{b}}\oplus T_{\xi_{b}}\big( O(E)_{b}\cap  \widetilde{L}_{\xi_{b}}  \big)
\end{equation}
Since for each $b\in B$ the group $K_{b}^{0}$ (defined in Section \ref{preliminaries-foliation}) acts effectively on $E_{b}$, the group $K_{b}^{0}$ induces a free action on $O(E)_{b}$ and the orbits of this action coincide with the intersection of the leaves of $\widetilde{\F}$ with $O(E)_{b}$. In particular:
\begin{equation}
\label{eq-2-lemma-groupoid-linear-foliation}
\dim T_{\xi_{b}}\big( O(E)_{b}\cap  \widetilde{L}_{\xi_{b}}  \big)=\dim K_{b}^{0}.
\end{equation}
Once $ \dim K_{b}^{0}$ does not depend on $b\in B$ (recall Corollary \ref{corollary-isomorphismK}) we infer from equations \eqref{eq-1-lemma-groupoid-linear-foliation} and \eqref{eq-2-lemma-groupoid-linear-foliation} that the foliation $\widetilde{\F}$ is a (regular) foliation, i.e., the dimensions of the leaves are constant.

Let $\mathrm{Hol}(\widetilde{\F}) \rightrightarrows O(E)$ be the holonomy groupoid of the foliation $\widetilde{\F}$.
Note that the lifted flows act on $O(E)$ through bundle automorphisms, and, as such, they are $O(n)$-equivariant. 
It follows that the foliation $\widetilde{\F}$ is invariant by the right $O(n)$-action on $O(E)$.
This action maps leafwise curves to leafwise curves, and submanifolds of leaves to transversal submanifolds of leaves. Therefore, the action can be lifted to an action on the holonomy groupoid and it is easy to check that this gives a a free and proper action by automorphims on $\mathrm{Hol}(\widetilde{F})\rightrightarrows O(E)$.

By taking the quotient under the $O(n)$-action we obtain the following commutative diagram
\[\xymatrix{
\mathrm{Hol}(\widetilde{\F}) \ar@<0.25pc>[r] \ar@<-0.25pc>[r] \ar[d] & O(E) \ar[d] \\
\mathrm{Hol}(\widetilde{\F})/O(n) \ar@<0.25pc>[r] \ar@<-0.25pc>[r] & O(E)/O(n) = B.
}\]
We conclude that the groupoid $\mathcal{G} = \mathrm{Hol}(\widetilde{\F})/O(n) \rightrightarrows B$ is a Lie groupoid (recall Proposition \ref{prop:quotient}).

We remark that $\mathcal{G}$ comes with a canonical representation on $E$. If we identify $E$ with the associated bundle $O(E)\times_{O(n)} \mathbb{R}^n$, then the representation is given by
\[\bar{g}\cdot [\xi, v] := [t(g),v],\] 
where $g$ is the unique representative of $\bar{g}$ in $\mathrm{Hol}(\widetilde{\F}$ such that $s(g) = \xi$.
  
Note also that the orbits of this representation coincide with the leaves of $\F^\ell$. In fact, any leafwise curve of $\widetilde{\F}$ is homotopic to a concatenation of linearized flows, and we can represent the arrows of $\mathrm{Hol}({\widetilde{\F}})$ as concatenation of linearized flows, then the arrows of the representation $\mathcal{G}\ltimes E$ are products between arrows of the form $[(\tilde{\varphi}_{t},\xi, v)]$ with source $[(\xi,v)]$ and target $[(\varphi_t(\xi),v)]=\varphi_t([(\xi,v)])$ generated by linearized flows.
Therefore, by defining $\mathcal{G}^{\ell}=\mathcal{G}\ltimes E$ the transformation Lie groupoid of the representation we obtain a Lie groupoid whose orbits are the leaves of the foliation $\Flin$.

A similar construction holds for $\hat{\F}^\ell$.
\end{proof}

The Lie groupoid $\mathcal{G}^\ell \rightrightarrows E$ constructed in the previous theorem will be called the \emph{Linear Holonomy Groupoid of $\F$ around $B$}. 

\begin{remark}
We remark that the Lie groupoid $\mathcal{G} \rightrightarrows B$ comes with canonical representation on the normal bundle of $\F$ restricted to $B$. In fact, even though the normal bundle $\nu(\F) = TM/T\F$ is not a smooth vector bundle, when we restrict it to $B$ we obtain an honest vector bundle $\nu(\F)|_B = E \oplus \nu(\F_B)$, where $\nu(\F_B) = TB/T\F_B$. On the one hand, $\mathcal{G}$ has a representation on $E$ discussed in the previous theorem: it is the action which gives rise to the linearized foliation on $E$. On the other hand, since $\mathcal{G}$ is a source connected regular Lie groupoid with orbit foliations $\F_B$, it follows that there exists a morphism of Lie groupoids which is a surjective submersion $\mathcal{G} \to \mathrm{Hol}(\F_B)$ (see \cite{Crainic-Moerdijk-foliation}). By composing this morphism with the canonical action of $\mathrm{Hol}(\F_B)$ on $\nu(\F_B)$ we obtain a representation of $\mathcal{G}$ on $\nu(\F_B)$.   
\end{remark}

\begin{remark}[Monodromy groupoid] \label{rmk:monodromy}
We note that a similar construction can be made using the monodromy groupoid $\Pi_1(\widetilde{\F}) \rightrightarrows O(E)$ of the lifted foliation $\widetilde{\F}$ instead of the monodromy groupoid. 
The groupoid obtained after taking the quotient by the $O(n)$-action has the same leaves as the groupoid constructed in the proof above. 
In fact, there is a natural groupoid covering map $\Pi_1(\widetilde{\F})/O(n) \to \mathrm{Hol}(\widetilde{\F})/O(n)$ induced by the covering map existent before taking the quotient by the $O(n)$-action.

\end{remark}

\begin{remark}[Regular case]\label{ex:regular-case}
In \cite{Molino}, Molino studied the structure of a (regular) Riemannian foliation $\F$ on a complete Riemannian manifold $B$ considering its lift to the associated orthogonal frame bundle. His construction could be considered a particular case of the construction above. More precisely in this case we can consider the vector bundle $E$ as the normal bundle of the foliation $\nu (\F)$ with the induced metric and the partial Bott connection $\nabla^{\tau}$ on $\nu(\F)$, where
\[\nabla^{\tau}_X (Y \text{ mod } \F) = [X,Y] \text{ mod } \F.\]
Since the Bott connection is locally flat, one sees that the lifted foliation $\widetilde{\F}$ on $O(E)$ is also regular.
\end{remark}

In order to get a better feeling of the Linear Holonomy Groupoid, we present bellow two extreme examples where the groupoid can be explicitly described.

\begin{example}[Regular case around closed leaf]\label{regular-case-2}
When $\F$ is regular and $B = L$ is a closed leaf of $\F$  we point out that $\mathcal{G}^\ell$ is in fact the linearization (see \cite{Crainic-Struchiner}) 
of the (usual) holonomy groupoid of the foliation $\F$ around $L$. In other words, $\mathcal{G}^\ell$ is isomorphic to the transformation groupoid 
$\mathrm{Hol}(\F)|_{L} \ltimes  \nu L \rightrightarrows \nu L$ where $\mathrm{Hol}(\F)|_{L} \rightrightarrows L$ is the restriction of the holonomy groupoid to $L$. 

In order to show this, we recall that $E$ is just the normal bundle $\nu(L)$, and connection $\nabla^\tau$ is the Bott connection of the foliation 
(see Remark \ref{ex:regular-case}). Moreover, by counting dimensions we see that the distribution $\mathcal{N}$ must be trivial, or equivalently, the Lie group $K^0_b$ is trivial for all $b \in B$. From this it follows that the lifted foliation $\widetilde{\F}$ on the orthogonal frame bundle $O(E)$ is the horizontal foliation determined by the (flat!) Bott connection. 

With this description we can compute the Lie groupoid $\mathrm{Hol}(\widetilde{\F}) \rightrightarrows O(E)$ explicitly. One obtains that
\[\mathrm{Hol}(\widetilde{\F}) = O(E)\times_L\mathrm{Hol}(\F)|_L = \{(\xi, [\alpha]): \pi(\xi) = \alpha(0)\},\]
where $\pi: O(E) \to L$ denotes the frame bundle projection. The source of $(\xi, [\alpha])$ is $\xi$ and its target is $\widetilde{\alpha}_\xi(1)$ where $\widetilde{\alpha}_\xi$ is the horizontal lift of $\alpha$ to $O(E)$ starting at the frame $\xi$. The product is 
\[(\xi_2, [\alpha_2])\cdot (\xi_1, [\alpha_1]) = (\xi_1, [\alpha_2 \ast \alpha_1]),\]
where $\alpha_2 \ast \alpha_1$ denotes the concatenation of $\alpha_1$ with $\alpha_2$.

It is then clear that $\mathrm{Hol}(\widetilde{\F}) / O(n)$ is isomorphic to $\mathrm{Hol}(\F)|_L$ and that under this identification, both representations on $\nu(L)=E$ coincide.

\end{example}

\begin{example}[Around a Fixed Point]
The example above is an extreme example because of the triviality of the of the group $K^0$. We now explain the other extreme case where the connection is trivial.

Let $\F$ be a SRF foliation on $M$ with a fixed point $x$, i.e., such that $L_x = \{x\}$, and let $B = \{x\}$. In this case, $E = T_xM$ is a vector space with an inner action, and the linearized foliation is given by the orbits of the subgroup $K^0$ of the orthogonal group of the vector space $T_xM$. The frame bundle in this case is just the orthogonal group itself, and the lifted foliation is $\widetilde{F} = \{K^0a: a \in O(T_xM)\}$. It is easy to check that this foliation has trivial holonomy groups, and therefore, its holonomy groupoid is given by
\[\mathrm{Hol}(\widetilde{\F}) = \{(ka,a): a \in O(T_xM), \text{ and } k \in K^0\}.\] 
The source of an arrow $(ka,a)$ is $a$ while its target is $ka$, and the product is given by $(k'ka, ka)\cdot(ka, a) = (k'ka, a)$. In other words, we can identify $\mathrm{Hol}(\widetilde{\F})$ with the action groupoid of the action of $K^0$ on $O(T_xM)$.

It now follows easily that $\mathcal{G} = \mathrm{Hol}(\widetilde{\F})/O(T_xM) = K^0$ (seen as a Lie groupoid over $\{x\}$), and that $\mathcal{G}^\ell = K^0 \ltimes T_xM \rightrightarrows T_xM$ is the action groupoid associated to the representation of $K^0$ on $T_xM$.    
\end{example}

\subsection{Proof of Theorem \ref{foliated-Ambrose-and-Singer}}
\label{Sec-foliated-Ambrose-and-Singer}

As in Theorem \ref{proposition-groupoid-linear-foliation}, let $O(E)$ be the orthogonal frame bundle associated to $E \to B$ and $\mathcal{H}^{\tau}$ the horizontal distribution on $O(E)$ along $T \F_B$ induced by the partial linear connection $\nabla^{\tau}$ on $E$.

Note that a parallel transport along a regular curve $\alpha\subset L$ (i.e., a curve that is a integral line of a vector field tangent to the leaves of $\F_{B}$) can be described by a linearized flow $t \to \varphi_{t}$ that can be lifted to a flow $t \to \varphi_{t}$ on $O(E)$. Let $\widetilde{\F} = \{\widetilde{L}_{\xi_b}\}_{\xi_b \in O(E)}$ be the singular foliation obtained as the orbits of the pseudo-group generated by these lifted flows. Our goal is to prove that $\widetilde{\F}$ is a regular foliation. Once we have proved this we can follow the same argument as in the proof of Theorem \ref{proposition-groupoid-linear-foliation}, i.e., we can define $\widetilde{\mathcal{G}} = \mathrm{Hol}(\widetilde{\F}) \rightrightarrows O(E)$ as the holonomy Lie groupoid of $\widetilde{\F}$, set $\mathcal{G}$ as the Lie groupoid $\widetilde{\mathcal{G}} / O(n) \rightrightarrows B$ and the desired Lie groupoid $\mathcal{G}^{\tau}$ will be $\mathcal{G} \ltimes E$. Note that for each $b \in B$ the holonomy group $\text{Hol}^{\tau}_{b}$ (with respect to $\nabla^{\tau}$) acts effectively and freely  on $O(E)_b$ and the orbits of this action coincide with the intersection of the leaves of $\widetilde{\F}$ with $O(E)_b$. In particular
\begin{equation} \label{eq-1-proof-theorem-holonomy}
\dim T_{\xi_{b}} \big( O(E)_b\cap \widetilde{L}_{\xi_{b}} \big) = \dim \mathfrak{hol}_{b}
\end{equation}
where $\mathfrak{hol}^{\tau}_{b}$ denotes de Lie algebra of the holonomy group $\text{Hol}^{\tau}_{b}$.

Equations \eqref{eq-1-lemma-groupoid-linear-foliation} (that also holds here), \eqref{eq-1-proof-theorem-holonomy}, and the fact that the dimension of $\widetilde{\F}$ is  lower semi-continuous, allow us to conclude that for a slice $S_{b}$ of $\F_{B}$ at $b$ and  each $y\in S_{b}$ close to $b$
\begin{equation}
\label{eq1-holonomy-constant}
\dim \mathfrak{hol}^{\tau}_{y}\geq \dim \mathfrak{hol}^{\tau}_{b}.
\end{equation}

Since $\F_{B}$ is dense on $B$, we can find a sequence $(b_{n})_{n \in \mathbb{N}}$ of points in $S_{b}$ so that $b_{n} \to y$ and $b_{n} \in L_{b}$ from which $b_{n} \in L_{b}$ we conclude that
\begin{equation} \label{eq2-holonomy-constant}
\dim \mathfrak{hol}^{\tau}_{b_{n}}=\dim \mathfrak{hol}^{\tau}_{b}.
\end{equation}
On the other hand, by replacing $b$ with $y$ and $y$ with $b_{n}$ in Equation \eqref{eq1-holonomy-constant}, we have that
\begin{equation} \label{eq3-holonomy-constant}
\dim \mathfrak{hol}^{\tau}_{b_{n}}\geq \dim \mathfrak{hol}^{\tau}_{y}.
\end{equation}
These equations together imply: $$\dim \mathfrak{hol}^{\tau}_{b} = \dim \mathfrak{hol}^{\tau}_{b_{n}} \geq \dim \mathfrak{hol}^{\tau}_{y} \geq \dim \mathfrak{hol}^{\tau}_{b}.$$ Therefore $\dim \mathfrak{hol}^{\tau}_{y}=\dim \mathfrak{hol}^{\tau}_{b}$ for $y\in S_{b}$ near $b$. This fact, together with Equations \eqref{eq-1-lemma-groupoid-linear-foliation} and \eqref{eq-1-proof-theorem-holonomy} imply that the foliation $\widetilde{\F}$ is a regular foliation.

\section{Lie groupoid structure of $\ol{\Flin}$}
\label{section-subgroupoid-closure}
In this section we show that if $B=\ol{L}$ is the closure of a leaf $L$ in $M$, then the leaf closure foliation $\ol{\Flin}$ comes from a proper Lie groupoid. 
Moreover, the linear holonomy groupoid $\mathcal{G}^{\ell}$ constructed in Section \ref{section-lie-groupoid-structure} is a subgroupoid of the groupoid $\ol{\mathcal{G}^{\ell}}$ describing $\ol{\Flin}$, and in fact, $\mathcal{G}^{\ell}$ is dense in $\ol{\mathcal{G}^{\ell}}$.

The proof of this result will rely on a similar statement for regular Riemannian foliations which can be seen as an extension of Molino's Theorem about leaf closures \cite{Molino}. We include the proof of this particular case in Section \ref{closure-regular-case} and proceed to the singular case in Section \ref{around-leaf-closure}.

\subsection{Closure on the regular case}\label{subsec-regular-case}
The structure theory for regular Riemannian foliations developed by Molino (\cite{Molino}) has as a consequence that the foliation $\ol{\F}$ given by the leaf closures of a regular Riemannian foliation $\F$ is itself a (possibly singular) Riemannian foliation. However, a bit more is true: the leaves of are the orbits of a proper Lie groupoid. This fact seems to be well known to the community working in the intersection of Lie groupoid theory and Riemannian foliations (see for example \cite{Wang}). Here we present a proof of this result in the spirit of the previous constructions of this paper.
 
\begin{proposition}\label{closure-regular-case}
 Let $\F$ be a regular Riemannian foliation on a complete manifold $(M,g)$.
 Then the leaves of the singular foliation $\ol{\F}$ are orbits of a proper Lie groupoid $\ol{\mathrm{Hol}({\F})}\rightrightarrows M$.
 Moreover, $\mathrm{Hol}(\F)$ is a dense subgroupoid of $\ol{\mathrm{Hol}(\F)}$. 
 \end{proposition}

\begin{proof}
Let $\F$ be a (regular) Riemannian foliation and denote by $E$ the normal bundle of $\F$, and by $\widetilde{\F}$ the lifted foliation on $O(E)$ using the Bott connection (see Example \ref{ex:regular-case}). This foliation is transversally parallelizable and this implies that $\ol{\widetilde{\F}}$ is a simple regular foliation \cite{Molino}.

Since $\ol{\widetilde{\F}}$ is a simple foliation and the restriction of $\widetilde{\F}$ to a leaf closure is a Lie foliation, then we conclude that both $\ol{\widetilde{\F}}$ and $\widetilde{\F}$ have trivial leafwise holonomy.
In other words, for each pair of points in a leaf there is a single holonomy class of a path connecting them and therefore the holonomy groupoid of $\ol{\widetilde{\F}}$ is just the set of pairs of points on the same leaf, and therefore is a proper Lie groupoid. 
Moreover, the homomorphism $\mathrm{Hol}(\widetilde{\F})\to\mathrm{Hol}(\ol{\widetilde{\F}})$ is injective.

We now show that $\mathrm{Hol}(\widetilde{\F})$ is dense in $\mathrm{Hol}(\ol{\widetilde{F}})$.
Let $x\xrightarrow{g} y$ be an arrow of $\mathrm{Hol}(\ol{\widetilde{\F}})$, this means that $x,y$ are in a same leaf closure $\ol{\widetilde{L}}$. 
There exists sequences $x_n, y_n$ in $\widetilde{L}$ converging to $x$ and $y$ respectively, and a sequence of path holonomies  $x_n\xrightarrow{h_n} y_n$.
Using the fact that $\mathrm{Hol}(\ol{\widetilde{\F}})$ is proper and has trivial isotropies we conclude that $h_n$ converges to $g$.

The action of $O(n)$ on $O(E)$ preserves the foliation $\widetilde{\F}$, and consequently preserves $\ol{\widetilde{\F}}$. 
It extended to a free and proper action  on $\mathrm{Hol}(\ol{\widetilde{\F}})\rightrightarrows O(E)$ by automorphisms. 
It follows that we obtain a commutative diagram of Lie groupoids 
\[\xymatrix{
\mathrm{Hol}(\ol{\widetilde{\F}}) \ar@<0.25pc>[r] \ar@<-0.25pc>[r] \ar[d] & O(E)  \ar[d]   \\
\mathrm{Hol}(\ol{\widetilde{\F }})/O(q) \ar@<0.25pc>[r] \ar@<-0.25pc>[r] & M .
}\]

 Let $\ol{\mathrm{Hol}(\F)}$ be the Lie groupoid $\mathrm{Hol}(\ol{\widetilde{\F }})/O(q) \rightrightarrows M.$ 
Since the leaves of the lifted foliation $\tilde{\F}$ projects into leaves of $\F$, and the projection $O(E) \xrightarrow{\pi} M$ is closed follows that 
$\ol{L} = \pi(\ol{\widetilde{L}})$ showing that the orbits of $\ol{\mathrm{Hol}(\F)}$ are the leaf closures of $\F$. Moreover, since $O(n)$ is a compact Lie group it follows also that $\ol{\mathrm{Hol}(\F)}$ is proper.

We deduce that $\mathrm{Hol}(\F)$ is a dense subgroupoid of $\ol{\mathrm{Hol}(\F)}$ from the fact that  $\mathrm{Hol}(\widetilde{\F})$ is a subgroupoid of $\mathrm{Hol}(\ol{\widetilde{\F}})$.

\end{proof}

\begin{remark}\label{rmk:foliation-with-symmetries}
If we assume moreover that  $M\curvearrowleft K$ is a right action by isometries which preserves $\F$, then $K$ acts on the right of $\ol{\mathrm{Hol}(\F)}\rightrightarrows M$ by automorphims. 
In fact, if $\F$ is invariant under the isometric action of $K$ on $M$, then for each $k$ in $K$ we have an isometry 
$E_x \xrightarrow{d R_{k}} E_{x\cdot k}$.
This defines a right action on $O(E)\curvearrowleft K$ which assigns a frame $\mathbb{R}^{n}\xrightarrow{\xi_x} E_x$ to 
\[
\xymatrix{ \mathbb{R}^{n}\ar[r]^{\xi_x}\ar@/^1.2pc/[rr]^{d R_{k}\circ \xi_x} &  E_x \ar[r]^{d R_{k}} & E_{x\cdot k}. }
\]
Since $K$ preserves $\F$ the above action preserves $\widetilde\F$.
Consequently this action sends closures to closures, and therefore $K$ acts by automorphims on $\mathrm{Hol}(\ol{\F'})$.
Note also that for $k$ in $K$ the map $k:O(E)\to O(E)$ is a $O(n)$-equivariant map.
So, for $\xi$ in $O(E)$ and $A$ in $O(n)$, we have 

$$
(\xi A)\cdot k= d R_{k}\circ (\xi \circ A) =  (d R_{k}\circ \xi)\circ A = (\xi \cdot k)A.
$$
Therefore, these actions commute, and the action of $K$ on $\mathrm{Hol}(\ol{\widetilde{\F}})$ induces a $K$ action on $\ol{\mathrm{Hol}(\F)}$ by automorphims.
\end{remark}

\subsection{Around the closure of a leaf}\label{around-leaf-closure}

We now use the results of Section \ref{subsec-regular-case} to generalize Proposition \ref{closure-regular-case} to the linearization of a SRF around the closure of a leaf.

\begin{theorem} 
Let $\F$ be a singular Riemannian foliation on a complete manifold $(M,g)$ and $B = \ol{L}$. 
Then there exists a proper Lie groupoid $\overline{\mathcal{G}^{\ell}}$ over  a saturated $\epsilon$-tubular neighborhood $U$ of $B$ whose orbits are 
 the leaves  of $\overline{\F^{\ell}}$. In addition $\mathcal{G}^{\ell}$ is a dense Lie subgroupoid of $\overline{\mathcal{G}^{\ell}}$.  

\end{theorem}

\begin{proof}
Since the foliation $\F_B$ is dense we can us Lemma \ref{lemma-total-connection} to produce a Riemannian metric on $O(E)$ such that the lifted foliation $\widetilde{F}$ is a regular Riemannian foliation.

Since $\widetilde{\F}$ is a Riemannian foliation, Proposition \ref{closure-regular-case} implies that there exists a proper Lie groupoid $\ol{\mathrm{Hol}(\widetilde{\F})}$ such that its orbit foliation is $\ol{\widetilde{\F}}$ and such that  $\mathrm{Hol}(\widetilde{\F})$ is a dense  subgroupoid. 

By taking the product of both of these Lie groupoids with the trivial Lie groupoid $\mathbb{R}^n\rightrightarrows \mathbb{R}^n$ we obtain an injective morphism of Lie groupoids
\[\xymatrix{
\mathrm{Hol}(\widetilde{\F}) \times \mathbb{R}^n \ar@<0.25pc>[dr] \ar@<-0.25pc>[dr] \ar[rr] & & \ol{\mathrm{Hol}(\widetilde{\F})} \times \mathbb{R}^n \ar@<0.25pc>[dl] \ar@<-0.25pc>[dl] \\
& O(E)\times \mathbb{R}^n .&}
\]

From Remark \ref{rmk:foliation-with-symmetries} and Proposition \ref{prop:quotient} we can take the quotient by $O(n)$ to obtain 
\[\xymatrix{
\mathrm{Hol}(\widetilde{\F})\times{\mathbb{R}^n} \ar[r] \ar[d] & \ol{\mathrm{Hol}(\widetilde{\F})} \times \mathbb{R}^n \ar[d]^{\pi} \\
\mathrm{Hol}(\widetilde{\F})/O(n)\ltimes E \ar[r] & \ol{\mathrm{Hol}(\widetilde{\F})}/O(n)\ltimes E.
}\]
This shows that $\mathcal{G}^{\ell}=\mathrm{Hol}(\widetilde{\F})/O(n)\ltimes E $ is a subgroupoid of $\ol{\mathcal{G}^{\ell}}:=\ol{\mathrm{Hol}(\widetilde{\F})}/O(n)\ltimes E$. Moreover, since $\ol{\mathrm{Hol}(\widetilde{\F})}$ is proper and the transformation groupoid of an action of a proper groupoid is again proper, it follows that $\ol{\mathcal{G}^{\ell}}$ is a proper Lie groupoid.

We will now show that $\mathcal{G}^{\ell}$ is dense in $\ol{\mathcal{G}^{\ell}}$. Let $g$ be an arrow of $\ol{\mathcal{G}^{\ell}}$. 
Fix an arrow $(\widetilde{g},v)$ in $\ol{\mathrm{Hol}(\widetilde{\F})}\times{\mathbb{R}^n}$ with $\pi(\widetilde{g},v)=g$.
By density there exists a sequence $\widetilde{h}_n$ in $\mathrm{Hol}(\widetilde{\F})$ converging to $\widetilde{g}$.
Setting $h_n=\pi(\widetilde{h}_n,v)$ we get a sequence on $\mathcal{G}^{\ell}$ converging to $g$, showing that $\mathcal{G}^{\ell}$ is dense in $\ol{\mathcal{G}^{\ell}}$.

Finally, in order to show that the orbits of $\ol{\mathcal{G}^{\ell}}$ are the leaf closures of the orbits of $\mathcal{G}^{\ell}$, we identify $E$ with the associated bundle $O(E) \times_{O(n)} \mathbb{R}^n$ and note that the natural projection map $\pi: O(E) \times \mathbb{R}^n \to E$ is closed. Since each leaf $L$ of $\Flin$ can be seen as $L=\pi(\tilde{L}\times\{v\})$ for a leaf $\tilde{L}$ of $\tilde{\F}$, it follows that $\ol{L} = \pi(\ol{\widetilde{L}}\times \{v\})$, and hence the orbit foliation of $\ol{\mathcal{G}^{\ell}}$ is $\ol{\Flin}$.

\end{proof}

\section{The Lie Algebroid Associated to The Infinitesimal Data} \label{section-rotate-translate-groupoid}

In this section, we discuss the Lie algebroid of the linear holonomy groupoid of a SRF, see Definition \ref{definition-algebroid}. The construction holds in a slightly more general setting where we drop any metric condition on the data. In what we present bellow, we will avoid a direct use of the SRF by using only the infinitesimal data that one obtains from the semi-local model of a SRF.  The main ingredients of the construction are as follows:

\begin{enumerate}
\item[(a)] A rank $n$ vector bundle  $E \to B$;
\item[(b)] a (regular) foliation $\F_B$ on $B$;
\item[(c)] a $\F_B$-partial linear connection $\nabla^{\tau}: \mathfrak{X}(\F_{B})\times \Gamma(E)\to \Gamma(E)$;
\item[(d)] a bundle of Lie algebras $\mathfrak{k} \to B$ such that:
\begin{enumerate}
\item[(d1)] $\mathfrak{hol}^{\tau}_{b} \subset \mathfrak{k}_{b}\subset \mathrm{End} (E_b)$, $\forall b \in B$, where $\mathfrak{hol}^{\tau}_{b}$ is the (leafwise) $\nabla$-holonomy Lie algebra of $\nabla^{\tau}$, $\mathfrak{k}_b$ is the fiber of $\mathfrak{k}$ over $b \in B$, and $\mathrm{End}(E_b)$ denotes the Lie algebra of endomorphisms of $E_b$, 
\item[(d2)] For all $X \in \mathfrak{X}(\F_B)$, $\Gamma(\mathfrak{k})$ is invariant by elements of the form $\nabla^\tau_X$ under the commutator bracket of $\Gamma(\mathrm{End}(E))$, i.e., 
\[\nabla_X^\tau \phi = \nabla_X^\tau \circ \phi - \phi \circ \nabla_X^\tau \in \Gamma(\mathfrak{k})\]
for all $X \in \mathfrak{X}(\F_B)$ and $\phi \in \Gamma(\mathfrak{k})$.

\end{enumerate}

\end{enumerate}

Using these ingredients we build an integrable Lie algebroid $A(\nabla^{\tau},\mathfrak{k}) \to B$ which satisfies the following properties:
\begin{itemize}
\item $A(\nabla^{\tau},\mathfrak{k})$ fits into an exact sequence of Lie algebroids
\[0 \longrightarrow \mathfrak{k} \longrightarrow A(\nabla^{\tau},\mathfrak{k}) \longrightarrow T\F_B \longrightarrow 0;\]
\item $A(\nabla^{\tau},\mathfrak{k})$ is a Lie subalgebroid of the Atiyah algebroid $\mathfrak{gl}(E)$ of the frame bundle of $E$.
\end{itemize}

\begin{remark}
If we start with a SRF $\F$ on $M$, and a closed saturated submanifold $B$ contained in a stratum of $M$, then we obtain the infinitesimal data from the semi-local model of $\F$ around $B$. In this case, $E$ is the normal bundle to $B$ in $M$, $\nabla^\tau$ is the $\F_B$-partial connection defined in the Proposition \ref{proposition-connections-tau-affine}, and $\mathfrak{k}_b$ is the Lie algebra of the Lie group $K^0_b$. For this particular example it follows that $A(\nabla^{\tau},\mathfrak{k})$ will be the Lie algebroid of the Lie groupoid $\mathcal{G} = \mathrm{Hol}(\widetilde{F})/O(n) \rightrightarrows B$, and the inclusion of $A(\nabla^{\tau},\mathfrak{k})$ is the restriction of the differential of the representation map $\mathcal{G} \to \mathrm{GL}(E)$. It then follows that the Lie algebroid of the linear holonomy groupoid $\mathcal{G}^\ell$ is the action algebroid associated to the representation $A(\nabla^{\tau},\mathfrak{k}) \to \mathfrak{gl}(E)$.
\end{remark}

\begin{remark}
Before we present the formal construction of our Lie algebroid, let us briefly give an intuition of how it appear 
in the case of $\mathcal{G} = \mathrm{Hol}(\widetilde{\F})/O(n) \rightrightarrows B$. 
As explained in Section \ref{section-lie-groupoid-structure}  
each leaf of the foliation $\widetilde{\F}$ on $O(E)$ is invariant under the action of $K^{0}$
and the basic distribution $\mathcal{H}^{\tau}$ is tangent to it. These facts  allow us to 
identify $T\widetilde{\mathcal{F}} $ with $\mathcal{H}^{\tau}\oplus\mathfrak{k}$. With this identification, we can compute the Lie bracket of $O(n)$-invariant vector fields tangent to $\widetilde{\F}$ in terms of their components. Passing to the quotient, we obtain a Lie bracket on the vector bundle 
$$T\mathcal{F}_{B}\oplus \mathfrak{k}=(\mathcal{H}^{\tau}\oplus\mathfrak{k})/O(n)=T\widetilde{\mathcal{F}}/O(n)\to O(E)/O(n)=B.$$ 

\end{remark}

\begin{remark}
Our construction can be thought of as a foliated version of the Ambrose-Singer reduction theorem. In fact, one can restate the classical theorem as follows: if $\nabla$ is a linear connection on a rank $n$ vector bundle $E \to B$, then for any Lie algebra $\mathfrak{k}$ such that $\mathfrak{hol}^\nabla \subset \mathfrak{k} \subset \mathfrak{gl}(n)$, there exists transitive Lie subalgebroid $A(\nabla, \mathfrak{k})$ of the Atiyah algebroid $\mathfrak{gl}(E)$ such that the isotropy Lie algebras of  $A(\nabla, \mathfrak{k})$ are all isomorphic to $\mathfrak{k}$. In the case of a foliated connection me must replace the Lie algebra above by a bundle of Lie algebras $\mathfrak{k}$ which contains the possibly singular bundle of Lie algebras $\mathfrak{hol}^\tau$.
\end{remark}

The first step needed in the construction of $A(\nabla^{\tau},\mathfrak{k})$ is a foliated version of the Atiyah algebroid $\mathfrak{gl}(E)$. For a vector bundle $E$ over a foliated manifold $B$ we define the \emph{foliated general linear algebroid} $\mathfrak{gl}(E,\F_B) \to B$ as follows. As a vector bundle,  $\mathfrak{gl}(E,\F_B)$ is the fibered product with respect to the anchor $\rho$ of $T\F_B$ with $\mathfrak{gl}(E)$, i.e,
\[\mathfrak{gl}(E,\F_B) = \{(X, D) \in T\F_B \oplus \mathfrak{gl}(E): \rho(D) = X\}.\]
We remark that $\mathfrak{gl}(E, \F_B)$ sits in a short exact sequence
\begin{equation}\label{Atiyah-sequence}
0 \longrightarrow \mathrm{End}(E) \longrightarrow \mathfrak{gl}(E, \F_B) \longrightarrow T\F_B \longrightarrow 0,
\end{equation}
and therefore $\mathfrak{gl}(E, \F_B)$ is a (smooth) vector bundle. The space of sections of $\mathfrak{gl}(E, \F_B)$ identifies with the space of \emph{$\F_B$-compatible derivations of $E$}: $\mathbb{R}$-linear operators $D: \Gamma(E) \to \Gamma(E)$ such that there exists a $\F_B$ foliated vector field $X \in \mathfrak{X}(\F_B)$ for which 
\[D(fs)=fD(s)+X_D(f)s\] 
for all $f$ in $C^{\infty}(B)$ and $s$ in $\Gamma(E)$. The Lie bracket of $\Gamma(\mathfrak{gl}(E, \F_B))$ is the commutator bracket of derivations. Finally, the anchor of $\mathfrak{gl}(E, \F_B)$ is $\rho(X,D) = X$.

The main purpose of considering the foliated general linear algebroid is that there is a  one-to-one correspondence between splittings of the foliated Atiyah sequence \eqref{Atiyah-sequence} and $\F_B$-partial connections on $E$ given by
\[\nabla^\tau_Xs = \tau(X)(s)\]
for any splitting $\tau: T\F_B \to \mathfrak{gl}(E, \F_B)$.

It follows that a choice of a $\F_B$-partial connection induces an identification of vector bundles $\mathfrak{gl}(E, \F_B) \simeq T\F_B \oplus \mathrm{End}(E)$. Under this identification we can re-express the anchor $\mathfrak{gl}(E, \F_B)$ as $\rho(X, \phi) = X$. The Lie bracket on the space of sections can also be re-expressed as
\[[(X_1,\phi_1), (X_2, \phi_2)] = ([X_1,X_2]_{\mathfrak{X}(\F_B)}, [\phi_1,\phi_2]_{\mathrm{End}(E)} + \nabla^\tau_{X_1} \phi_2 - \nabla^\tau_{X_2} \phi_1 - R^{\nabla^\tau}(X_1,X_2)),\]
for all $(X_1,\phi_1), (X_2,\phi_2) \in \Gamma(T\F_B \oplus \mathrm{End}(E))$, where 
\[R^{\nabla^\tau} (X_1,X_2)= \nabla^\tau_{[X_1,X_2]} - [\nabla^\tau_{X_1},\nabla^\tau_{X_2}] \in \Gamma(\mathrm{End}(E))\]
denotes the curvature of $\nabla^\tau$, and 
\[\nabla_X^\tau \phi = \nabla_X^\tau \circ \phi - \phi \circ \nabla_X^\tau \in \Gamma(\mathrm{End}(E))\]
is the induced $\F_B$-partial linear connection on $\mathrm{End}(E)$.

It is by now clear how to construct the Lie subalgebroid $A(\nabla^{\tau},\mathfrak{k})$ of $\mathfrak{gl}(E)$. 

\begin{definition}
\label{definition-algebroid}
As a vector bundle we take $A(\nabla^{\tau},\mathfrak{k}) = T\F_B \oplus \mathfrak{k}$. Its anchor map is the projection onto the first factor, and its bracket is given by the restriction of the bracket on $\mathfrak{gl}(E, \F_B)$, i.e.,
\[[(X_1,\phi_1), (X_2, \phi_2)] = ([X_1,X_2]_{\mathfrak{X}(\F_B)}, [\phi_1,\phi_2]_{\mathfrak{k}} + \nabla^\tau_{X_1} \phi_2 - \nabla^\tau_{X_2} \phi_1 - R^{\nabla^\tau}(X_1,X_2)),\]
for all $(X_1,\phi_1), (X_2,\phi_2) \in \Gamma(T\F_B \oplus \mathfrak{k})$.
\end{definition}

\begin{proposition}
\label{proposition-algebroid-puro}
$A(\nabla^{\tau},\mathfrak{k})$ is a Lie subalgebroid of $\mathfrak{gl}(E)$.
\end{proposition}

\begin{proof}
Since $\mathfrak{gl}(E,\F_B)$ is a Lie subalgebroid of $\mathfrak{gl}(E)$, it suffices to show that $A(\nabla^{\tau},\mathfrak{k})$ is a Lie subalgebroid of $\mathfrak{gl}(E,\F_B)$. Therefore we must show that $\Gamma(A(\nabla^{\tau},\mathfrak{k}))$ is closed under the Lie bracket of $\Gamma(\mathfrak{gl}(E,\F_B))$. This boils down to 
\begin{itemize}
\item $\mathfrak{k}$ is a sub bundle of Lie algebras of $\mathrm{End}(E)$	: this is part of condition (d1);
\item $R^{\nabla^\tau}(X_1, X_2)$ takes value in $\mathfrak{k}$ for all $X_1, X_2 \in \mathfrak{X}(\F_B)$: this follows from the fact that $\mathfrak{hol}^\tau_b \subset \mathfrak{k}_b$ for all $b \in B$, which is also part of condition (d1);
\item $\nabla^\tau_X\phi$ belongs to $\Gamma(\mathfrak{k})$ for all $X\in \mathfrak{X}(\F_B)$, and $\phi \in \Gamma(\mathfrak{k})$: this is condition (d2).
\end{itemize}
\end{proof}

\begin{remark}
In the case where $E$ is an Euclidean vector bundle, it is common to consider the Lie subalgebroid $\mathfrak{o}(E)$ instead of $\mathfrak{gl}(E)$, where $\mathfrak{o}(E)$ is the Lie subalgebroid whose space of sections is the subspace of derivations of $E$ which satisfy
\[X_D\langle s_1, s_2 \rangle = \langle D(s_1), s_2 \rangle + \langle s_1, D(s_2) \rangle \text{ for all } s_1, s_2 \in \Gamma(E).\]
If we use the fiberwise metric on $E$ to identify $\mathrm{End}(E)$ with $E^* \otimes E^*$, then $\mathfrak{o}(E)$ is the Lie subalgebroid which fits into the exact sequence
\[0 \longrightarrow E^*\wedge E^* \longrightarrow \mathfrak{o}(E) \longrightarrow TB \longrightarrow 0,\]
and splittings of this sequence are in one-to-one correspondence with linear connections which are compatible with the fiberwise metric on $E$. The Lie subalgebroid $\mathfrak{o}(E)$ is the Lie algebroid of the Lie groupoid $\mathcal{O}(E) \rightrightarrows B$ whose arrows consist of linear isometries between the fibers of $E$ (see Example \ref{ex:gl-groupoid}).

When $B$ is a foliated manifold we may construct a Lie subalgebroid $\mathfrak{o}(E, \F_B)$ which is analogous to $\mathfrak{gl}(E, \F_B)$. Splittings of the corresponding short exact sequence are in one-to-one correspondence with $\F_B$-partial connections compatible with the fiberwise metric on $E$.

Finally, we remark that the infinitesimal data associated to a SRF $\F$ around a closed saturated submanifold $B$ of a regular stratum satisfies a stronger version of conditions (a) through (d) of the beginning of this section. In this case $\nabla^\tau$ is compatible with the metric on $E$, and $\mathfrak{k}$ is a sub bundle of Lie algebras of $E^*\otimes E^*$. It then follows that $A(\nabla^\tau, \mathfrak{k})$ is a Lie subalgebroid of $\mathfrak{o}(E)$. 
\end{remark}

\begin{proposition}
\label{proposition-algebroid-SRF}
When the infinitesimal data (a)-(d) come from a SRF $\F$ around a closed saturated submanifold $B$ of a regular stratum, then $A(\nabla^\tau, \mathfrak{k})$ is the Lie algebroid of the Lie groupoid $\mathcal{G} = \mathrm{Hol}(\widetilde{F})/O(n) \rightrightarrows B$ constructed in Section \ref{section-lie-groupoid-structure}.
\end{proposition}

\begin{proof}
We consider $A(\nabla^\tau, \mathfrak{k})$ as a Lie subalgebroid $\mathfrak{o}(E)$ and follow the integration scheme for Lie subalgebroids developed in \cite{Moerdijk-Mrcun-subalgebroid} and described at the end of Section \ref{section-facts-algebroids} of this paper.

The source fibers of $\mathcal{O}(E)$ identify with the orthogonal frame bundle of $O(E)$, and under this identification the right invariant foliation $\F^{A(\nabla^\tau, \mathfrak{k})}$ on $\mathcal{O}(E)$ is mapped to to the lifted $\widetilde{F}$ on $O(E)$. It then follows that the Lie groupoid integrating $A(\nabla^\tau, \mathfrak{k})$ obtained by taking $\mathrm{Hol}(\F^{A(\nabla^\tau, \mathfrak{k})})/ \mathcal{O}(E)$ is isomorphic to $\mathcal{G} =  \mathrm{Hol}(\widetilde{F})/O(n)$.
\end{proof}

\begin{remark}
With the explicit description developed here it is now a simple computation the inclusion of $A(\nabla^\tau, \mathfrak{k})$ in $\mathfrak{gl}(E)$ is the restriction of the differential of the Lie groupoid morphism $\mathcal{G} = \mathrm{Hol}(\widetilde{F})/ O(n) \to GL(E)$ induced from the representation of of $\mathcal{G}$ on $E$. It then follows that the Lie algebroid of the linear holonomy groupoid $\mathcal{G}^\ell \rightrightarrows E$ is the action algebroid $A(\nabla^\tau, \mathfrak{k})\ltimes E \to E$. 
\end{remark}


\bibliographystyle{amsplain}

\end{document}